\documentclass{article}
\usepackage{graphicx} 
\usepackage{url}
\usepackage{amssymb}
\usepackage{hyperref}
\usepackage{url}
\usepackage{lineno}
\usepackage{amsmath,amsthm,amssymb}
\usepackage{thmtools}
\usepackage{physics}
\usepackage{mathtools}
\usepackage{mathrsfs}
\usepackage{indentfirst}
\usepackage{bbm}
\newcommand{\allones}{\mathbbm{1}}
\newcommand{\identity}{\mathbb{I}}

\newtheorem{theorem}{Theorem}[section]
\newtheorem{lem}[theorem]{Lemma}
\newtheorem{cor}[theorem]{Corollary}
\newtheorem{prop}[theorem]{Proposition}

\theoremstyle{definition}			                						
\newtheorem{definition}[theorem]{Definition}
\newtheorem{rem}[theorem]{Remark}

\usepackage{arxiv}

\usepackage[utf8]{inputenc} 
\usepackage[T1]{fontenc}    
\usepackage{hyperref}       
\usepackage{url}            
\usepackage{booktabs}       
\usepackage{amsfonts}       
\usepackage{nicefrac}       
\usepackage{microtype}      
\usepackage{lipsum}
\usepackage{graphicx}
\graphicspath{ {./images/} }

\title{Some Spectral Properties of the $B_{\alpha}$ Matrix of a Graph}

\author{
 Garrett Kepler \\
  Department of Mathematics \& Statistics \\
  Washington State University\\
  Pullman, WA, USA \\
  \texttt{garrett.kepler@wsu.edu} \\
}

\begin{document}
\maketitle
\begin{abstract}
Recently, Samanta \emph{et al.} introduced a unifying perspective to common graph-associated matrices through the matrix $B_{\alpha}(G)=\alpha A(G)+(1-\alpha)L(G)$ where $\alpha\in [0,1]$. We resolve an open problem regarding the positive semi-definiteness of $B_{\alpha}$ and similar matrices. We also explore eigenvector properties of $B_{\alpha}$ through edge and vertex removal. As a consequence, we provide a basis for results on the common graph-associated matrices.
\end{abstract}

\section{Introduction}
Let graph $G=(V(G),E(G))$ be a graph with vertex set $V(G)$ and edge set $E(G)$ such that $|V(G)|=n$. If an edge $\{u,v\}$ exists in $E(G)$ for vertices $u,v\in V(G)$, we say they are \emph{adjacent} (not adjacent) and denote this relationship $u\sim v$ ($u\nsim v$). Throughout, we will assume $G$ is undirected $(u\sim v \implies v\sim u)$ and simple $(u\nsim u)$. The \emph{degree} of a vertex $v$ in a graph $G$, $d_G(v)$, is the number of vertices adjacent to $v$. The minimum degree of a graph will be denoted $\delta(G)$. Let $M_n$ be the set of $n\times n$ matrices. For a matrix $M\in M_n$ to \emph{admit} an eigenvector $x\in \mathbb{R}^n$, we mean $Mx=\lambda x$ for some scalar $\lambda$. In the following sections, we will refer to several matrices defined below.
\begin{definition}[Adjacency Matrix]
    For a graph $G$, we define the adjacency matrix, $A=A(G)\in M_n$, such that
    \begin{align*}
        [A]_{ij}&=\begin{cases}
            1~~\text{if $i\sim j$}\\
            0~~\text{otherwise}
        \end{cases}
    \end{align*}
\end{definition}

\begin{definition}[Degree Matrix]
The degree matrix, $D=D(G)\in M_n$, is the diagonal matrix with diagonal entries corresponding to vertex degrees. That is,
    \begin{align*}
        [D]_{ij} &= \begin{cases}
            d_G(i)~~\text{if }i=j\\
            0~~\text{otherwise}
        \end{cases}
    \end{align*}
\end{definition}

\begin{definition}[Laplacian Matrix]
    The Laplacian matrix, $L=L(G)\in M_n$, is defined as $L=D-A$.
\end{definition}
\begin{definition}[Normalized Matrices]
    The normalized adjacency matrix, $\mathcal{A}=\mathcal{A}(G)\in M_n$, is defined as $\mathcal{A}=D^{-1/2}AD^{-1/2}$. Likewise, the normalized Laplacian matrix is defined as $\mathcal{L}=\identity-\mathcal{A}$ where $\identity$ denotes the $n\times n$ identity matrix.  
\end{definition}

Lastly, we will prove results about the following matrices and use them to prove results about the Laplacian and adjacency matrix. 
\begin{definition}[$\alpha$-Matrices]
    We define $A_{\alpha}(G)= \alpha D(G)+(1-\alpha)A(G)$ and $B_{\alpha}(G)= \alpha A(G)+(1-\alpha)L(G)$ where $\alpha\in [0,1]$.
\end{definition}
The matrices above are indeed special cases of the $\alpha$-matrices. That is, the adjacency, Laplacian, and degree matrix have the property that $L(G)=B_0(G)=A_1(G)-A_0(G),~A(G) = B_1(G)=A_0(G),~$ and $D(G)=2B_{\frac{1}{2}}(G)=A_1(G)$ respectively. Introduced in \cite{sam24}, $B_{\alpha}$ has many useful applications in general theories of graph-associated matrices. Here, we will use $B_{\alpha}$ to obtain general results about eigenvectors of graphs. Moreover, will prove conditions on $\alpha$ that guarantee $A_{\alpha}$ and $B_{\alpha}$ are  positive semi-definite (PSD). For conciseness, we will denote $B_{\alpha}$ as $B$ and use subscripts when $\alpha$ is to take on a specific value.

As noted in the seminal paper \cite{sam24}, we can reformulate $B$ in several equivalent ways. For example:
\begin{align*}
    B&=\alpha A+(1-\alpha)L\\
    &=(2\alpha - 1)A+(1-\alpha)D\\
    &= (1-2\alpha)L+\alpha D
\end{align*}
Clearly, for $\alpha=0$, $B$ inherits positive semi-definiteness from the Laplacian. Contrarily, for $\alpha=1$, $B$ is not-PSD for a graph with at least one edge. In fact, Samanta \emph{et al.} were able to achieves bounds on when $B$ is positive definite: 
\begin{cor}[Corollary 3.2 of \cite{sam24}]\label{posidef}
    If $G$ has no isolated vertices, $B_\alpha$ is PSD for all $\alpha\in (0,\frac{2}{3})$.
\end{cor}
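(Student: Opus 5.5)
The plan is to split according to the sign of the coefficient $2\alpha-1$ that multiplies $A$ in the reformulation $B=(2\alpha-1)A+(1-\alpha)D$. In each regime I will write $B$ as a nonnegative combination of matrices already known to be PSD. The claim then follows because a sum of PSD matrices with nonnegative coefficients is PSD: $x^TBx$ is a nonnegative combination of nonnegative quadratic forms.

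\emph{Case $\alpha\in(0,\tfrac12]$.} Use the form $B=(1-2\alpha)L+\alpha D$. Here $1-2\alpha\ge 0$ and $\alpha>0$. The Laplacian is PSD, since $x^TLx=\sum_{\{u,v\}\in E(G)}(x_u-x_v)^2\ge 0$. The degree matrix is diagonal with nonnegative entries, so it is also PSD. Hence $B$ is PSD.

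\emph{Case $\alpha\in(\tfrac12,\tfrac23)$.} Here the coefficient of $A$ is positive, so $B$ is no longer a combination involving $L$. Instead I will use the signless Laplacian $Q=D+A$, which is PSD because $x^TQx=\sum_{\{u,v\}\in E(G)}(x_u+x_v)^2\ge 0$. Rewrite
\begin{align*}
B=(1-\alpha)D+(2\alpha-1)A=(2\alpha-1)(D+A)+(2-3\alpha)D.
\end{align*}
Both coefficients are nonnegative exactly when $\tfrac12\le\alpha\le\tfrac23$, so $B$ is PSD on this interval.

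The only real step is finding this decomposition, i.e.\ recognizing that $D+A$ must be peeled off; the threshold $\tfrac23$ appears precisely where the leftover coefficient $2-3\alpha$ of $D$ hits zero. The argument never uses the hypothesis that $G$ has no isolated vertices, since isolated vertices only contribute zero rows and columns. It also covers the closed interval $[0,\tfrac23]$.
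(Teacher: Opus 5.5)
Your proof is correct. Note that the paper does not prove this statement at all: it quotes it from Samanta et al.\ and uses it as a black box, so yours is a genuinely different, self-contained route. Your identities $B_\alpha=(1-2\alpha)L+\alpha D$ and $B_\alpha=(2\alpha-1)(D+A)+(2-3\alpha)D$ are right. Together they give PSD on all of $[0,\tfrac23]$ with no hypothesis on $G$, which is more than the quoted statement. On $[\tfrac12,\tfrac23]$ your argument is equivalent to Nikiforov's Proposition 6, quoted next in the paper, since $B_\alpha=\alpha A_{(1-\alpha)/\alpha}$ and $(1-\alpha)/\alpha\in[\tfrac12,1]$ there.

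What your closing remark misses is what the hypothesis is for. If $\delta(G)\ge1$, then $D$ is positive definite. On the open interval $(0,\tfrac23)$ the coefficient of $D$ in your decompositions ($\alpha$, resp.\ $2-3\alpha$) is strictly positive, so $B_\alpha$ is in fact positive definite. That strict version is exactly what the paper invokes in the proof of the theorem on $\beta_0(G)$, where it asserts $\lambda_n(B_\alpha)>0$ on $(0,\tfrac23)$. Both the hypothesis and the open endpoints are needed for it. $B_0=L$ is always singular, $B_{2/3}=\tfrac13(D+A)$ is singular whenever $G$ has a bipartite component, and an isolated vertex gives a zero row for every $\alpha$. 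Your decomposition yields this strengthening at no extra cost, so it is worth stating.
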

Likewise, Nikiforov was able to prove bounds for $A_{\alpha}$ in \cite{niki17}:
\begin{prop}[Proposition 6 of \cite{niki17}]
    $A_{\alpha}$ is PSD for all $\alpha\in (\frac{1}{2},1]$ and positive definite for all $\alpha\in (\frac{1}{2},1]$ when the graph has no isolated vertices.
\end{prop}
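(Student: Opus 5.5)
The plan is to prove both claims from one identity for the quadratic form of $A_{\alpha}$, written as a sum over edges. For $x\in\mathbb{R}^n$ we have $x^{T}Dx=\sum_{v}d_G(v)x_v^2=\sum_{\{u,v\}\in E(G)}(x_u^2+x_v^2)$ and $x^{T}Ax=\sum_{\{u,v\}\in E(G)}2x_ux_v$. Hence
\begin{align*}
x^{T}A_{\alpha}x=\sum_{\{u,v\}\in E(G)}\left[\alpha(x_u^2+x_v^2)+2(1-\alpha)x_ux_v\right].
\end{align*}

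The key step is to rewrite each edge term as a sum of manifestly nonnegative pieces. A direct expansion shows
\begin{align*}
\alpha(a^2+b^2)+2(1-\alpha)ab=(1-\alpha)(a+b)^2+(2\alpha-1)(a^2+b^2).
\end{align*}
Equivalently, the $2\times 2$ matrix $\begin{pmatrix}\alpha&1-\alpha\\1-\alpha&\alpha\end{pmatrix}$ has eigenvalues $1$ and $2\alpha-1$. For $\alpha\in(\frac{1}{2},1]$ both coefficients $1-\alpha$ and $2\alpha-1$ are nonnegative, so every edge term is nonnegative. Therefore $x^{T}A_{\alpha}x\ge 0$, which gives the PSD claim. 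The same argument also covers the endpoint $\alpha=\frac{1}{2}$.

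For positive definiteness, discard the $(1-\alpha)(a+b)^2$ terms and fold the remaining sum back into vertex form:
\begin{align*}
x^{T}A_{\alpha}x\;\ge\;(2\alpha-1)\sum_{\{u,v\}\in E(G)}(x_u^2+x_v^2)=(2\alpha-1)\sum_{v}d_G(v)x_v^2\;\ge\;(2\alpha-1)\,\delta(G)\,\|x\|^2 .
\end{align*}
If $G$ has no isolated vertices then $\delta(G)\ge 1$. If moreover $\alpha>\frac{1}{2}$, the right-hand side is strictly positive for every $x\neq 0$, so $A_{\alpha}$ is positive definite. This also yields the explicit bound $\lambda_{\min}(A_{\alpha})\ge(2\alpha-1)\delta(G)$.

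There is no serious obstacle. The only step needing care is finding the decomposition of the edge term into the two nonnegative pieces; after that, both claims follow by summing over edges. As a sanity check, at $\alpha=1$ the bound gives $A_1=D$ with smallest eigenvalue at least $\delta(G)$, which is correct. The hypothesis of no isolated vertices cannot be dropped: an isolated vertex gives a zero row and column, and hence a zero eigenvalue.
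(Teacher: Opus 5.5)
Your proof is correct and complete. The identity $\alpha(a^2+b^2)+2(1-\alpha)ab=(1-\alpha)(a+b)^2+(2\alpha-1)(a^2+b^2)$ checks out. Summing it over edges gives both the PSD claim, including the endpoint $\alpha=\frac12$, and the bound $\lambda_{\min}(A_\alpha)\ge(2\alpha-1)\delta(G)$.

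The paper does not prove this proposition; it only quotes it from Nikiforov. So the natural comparison is with the paper's own Section 2 machinery. In that framework one would write $x^TA_\alpha x=\alpha x^TDx+(1-\alpha)x^TAx\ge\bigl(\alpha+(1-\alpha)\lambda_n(\mathcal{A})\bigr)\,x^TDx$, using the Rayleigh quotient in Lemma \ref{chung}. Then $\lambda_n(\mathcal{A})\ge -1$ gives the same factor $2\alpha-1$.

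The two routes rest on the same fact: your $\sum_{\{u,v\}\in E(G)}(x_u+x_v)^2\ge 0$ is exactly what gives $\lambda_n(\mathcal{A})\ge -1$. They buy different things, though.

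\begin{itemize}
\item Your edgewise argument is self-contained and never needs $D$ invertible. It therefore handles the PSD claim for graphs with isolated vertices directly.
\item The normalized route keeps the graph-dependent constant $\alpha+(1-\alpha)\lambda_n(\mathcal{A})$. Equality is attained at $x=D^{-1/2}z$ for a bottom eigenvector $z$ of $\mathcal{A}$. This pins down the exact PSD threshold $\alpha_0(G)=\frac{\lambda_n(\mathcal{A})}{\lambda_n(\mathcal{A})-1}\le\frac12$ of the paper's later theorem, without appealing to monotonicity or continuity in $\alpha$.
\end{itemize}

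Your bound is the worst case of that threshold over all graphs, attained when $\lambda_n(\mathcal{A})=-1$, for example for bipartite graphs.
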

In both works, the authors proposed the problem of finding an $\alpha\in(0,1)$ where a transition between PSD and not-PSD occurs for these matrices. We resolve these in the following section.
\section{Positive Semi-Definiteness of $\alpha$-Matrices}

To obtain our result, we utilize the smallest eigenvalue of the normalized adjacency matrix $\mathcal{A}$. Coming from Equation 1.6 and Lemma 1.7 of \cite{chung97}, the following statement will be foundational.
\begin{lem}[\cite{chung97}]\label{chung}
For the normalized adjacency matrix $\mathcal{A}$ and normalized Laplacian $\mathcal{L}$ of $G$, we have
    \begin{align*}
    \lambda_n(\mathcal{A})&=\min_{\substack{x\neq 0\\
        ||x||=1}}x^T\mathcal{A}x=\min_{y\neq 0}\frac{y^TAy}{y^TDy}=1-\mu_1(\mathcal{L})=1-\max_{\substack{x\neq 0\\
        ||x||=1}}x^T\mathcal{L}x
    \end{align*}
Moreover, for a graph $G$ with no isolated vertices, we have $\frac{n}{n-1}\leq \mu_1(\mathcal{L})\leq 2$ and $-\frac{1}{n-1}\geq \lambda_n(\mathcal{A})\geq -1$
\end{lem}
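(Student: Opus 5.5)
The plan is to prove the chain of equalities first and then the two-sided bounds. Throughout I assume $G$ has no isolated vertices, so that $D$ is invertible and $D^{-1/2}$ makes sense.

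\textbf{The equalities.} Since $\mathcal{A}$ is real symmetric, the Rayleigh--Ritz theorem gives $\lambda_n(\mathcal{A})=\min_{\|x\|=1}x^T\mathcal{A}x$. Homogeneity lets me drop the normalization and minimize $x^T\mathcal{A}x/x^Tx$ over $x\neq 0$. I will then substitute $x=D^{1/2}y$, which is a bijection on nonzero vectors. Under this substitution $x^T\mathcal{A}x=y^TAy$ and $x^Tx=y^TDy$, which yields the generalized Rayleigh quotient $\min_{y\neq0} y^TAy/y^TDy$. Finally, $\mathcal{L}=\identity-\mathcal{A}$ gives $x^T\mathcal{L}x=1-x^T\mathcal{A}x$ for unit $x$. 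Hence minimizing $x^T\mathcal{A}x$ is the same as maximizing $x^T\mathcal{L}x$, so $\lambda_n(\mathcal{A})=1-\mu_1(\mathcal{L})$, again by Rayleigh--Ritz.

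\textbf{Upper bound $\mu_1(\mathcal{L})\le 2$.} By the equalities, it suffices to show $y^TAy\ge -y^TDy$, i.e.\ $y^T(D+A)y\ge 0$. This follows from the identity
\begin{align*}
y^T(D+A)y=\sum_{\{u,v\}\in E(G)}(y_u+y_v)^2 .
\end{align*}
The same identity with signs flipped, $y^T(D-A)y=\sum_{\{u,v\}\in E(G)}(y_u-y_v)^2$, gives $\lambda_1(\mathcal{A})\le 1$. It also shows that $D^{1/2}\allones$ is an eigenvector of $\mathcal{A}$ with eigenvalue $1$, so $\lambda_1(\mathcal{A})=1$. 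I will need this in the next step.

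\textbf{Lower bound $\mu_1(\mathcal{L})\ge \frac{n}{n-1}$.} This is the step I expect to be the main obstacle. Rather than exhibit a test vector, I would use the trace. First, $\operatorname{tr}\mathcal{A}=0$ because $A$ has zero diagonal. Combined with $\lambda_1(\mathcal{A})=1$, the remaining $n-1$ eigenvalues sum to $-1$. Their minimum is therefore at most their average, so $\lambda_n(\mathcal{A})\le -\frac{1}{n-1}$. Equivalently, $\mu_1(\mathcal{L})=1-\lambda_n(\mathcal{A})\ge \frac{n}{n-1}$. Together with $\mu_1(\mathcal{L})\le 2$ from the previous step, both stated bounds on $\lambda_n(\mathcal{A})$ follow at once from $\lambda_n(\mathcal{A})=1-\mu_1(\mathcal{L})$.

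The main points needing care are these. I must justify $\lambda_1(\mathcal{A})=1$ rather than merely $\le 1$, and I must note that this requires no isolated vertex. Otherwise $D$ is singular, the normalized matrices need the usual convention $D^{-1/2}_{vv}=0$, and the trace argument changes. Connectedness of $G$ is not needed, since the trace argument only uses that $1$ is some eigenvalue.
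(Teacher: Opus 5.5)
Your proof is correct and is essentially the standard argument behind the cited result; the paper gives no proof of its own and only refers to Equation 1.6 and Lemma 1.7 of Chung. That argument has three parts: the Rayleigh-quotient substitution $x=D^{1/2}y$; the identity $y^T(D+A)y=\sum_{\{u,v\}\in E(G)}(y_u+y_v)^2\ge 0$, which is equivalent to the elementary bound $(a-b)^2\le 2(a^2+b^2)$ used there and gives $\mu_1(\mathcal{L})\le 2$; and the trace argument giving $\mu_1(\mathcal{L})\ge \frac{n}{n-1}$. One small simplification: the trace step only uses $\lambda_1(\mathcal{A})\ge 1$, which the test vector $D^{1/2}\allones$ gives directly, so the bound $\lambda_1(\mathcal{A})\le 1$ is not actually needed there.
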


\begin{theorem}\label{general}
    Let $a,b\neq 0$ and $G$ be an undirected graph such that $\delta(G)\geq 1$. Then, $\lambda_n(aD+bA)=0$ if and only if $\lambda_n(\mathcal{A})=\frac{-a}{b}$
\end{theorem}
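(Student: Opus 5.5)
The plan is to reduce everything to the normalized adjacency matrix by a congruence and then apply Sylvester's law of inertia. Since $\delta(G)\geq 1$, the matrix $D$ is invertible, $D^{1/2}$ and $D^{-1/2}$ are well defined, and
\begin{align*}
aD+bA \;=\; D^{1/2}\left(a\identity + b\,D^{-1/2}AD^{-1/2}\right)D^{1/2} \;=\; D^{1/2}\left(a\identity+b\mathcal{A}\right)D^{1/2}.
\end{align*}
Thus $aD+bA$ is congruent to $a\identity+b\mathcal{A}$ via the invertible symmetric matrix $D^{1/2}$. By Sylvester's law of inertia the two matrices have the same numbers of positive, zero and negative eigenvalues. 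The condition $\lambda_n(aD+bA)=0$ says exactly that $aD+bA$ has no negative eigenvalues and at least one zero eigenvalue. So it holds if and only if the same is true of $a\identity+b\mathcal{A}$, that is, if and only if $\lambda_n(a\identity+b\mathcal{A})=0$. Equivalently, using the Rayleigh-quotient form of Lemma~\ref{chung}, this says $\min_{y\neq 0} y^T(aD+bA)y/(y^TDy)=0$, and one could argue directly with that quotient instead.

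The next step is to read off the spectrum of $a\identity+b\mathcal{A}$: its eigenvalues are $a+b\lambda_i(\mathcal{A})$. For $b>0$ the smallest of these is $a+b\lambda_n(\mathcal{A})$, which vanishes if and only if $\lambda_n(\mathcal{A})=-a/b$. This gives both directions of the equivalence at once.

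The main obstacle is the sign of $b$, which the statement does not fix. For $b<0$ the map $t\mapsto a+bt$ is decreasing, so the smallest eigenvalue of $a\identity+b\mathcal{A}$ is $a+b\lambda_1(\mathcal{A})=a+b$, using $\lambda_1(\mathcal{A})=1$ with eigenvector $D^{1/2}\allones$. If $\lambda_n(\mathcal{A})=-a/b$ with $b<0$, then $a+b<a+b\lambda_n(\mathcal{A})=0$, since $\lambda_n(\mathcal{A})<1$ when $G$ has an edge. So $\lambda_n(aD+bA)<0$ and the claimed equivalence fails. I would therefore prove the theorem under the hypothesis $b>0$, which is the case needed for the PSD threshold questions on $A_\alpha$ and $B_\alpha$ (where $b$ plays the role of the $A$-coefficient, e.g.\ $2\alpha-1$ after dividing out a sign). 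I would add a remark that for $b<0$ the correct condition is $a=-b$, obtained by replacing $\lambda_n(\mathcal{A})$ with $\lambda_1(\mathcal{A})=1$. The bounds $-1\leq\lambda_n(\mathcal{A})\leq -\frac{1}{n-1}$ from Lemma~\ref{chung} then tell us that the admissible ratio $-a/b$ lies in $[-1,-\frac{1}{n-1}]$, i.e.\ $a/b\in[\frac{1}{n-1},1]$.
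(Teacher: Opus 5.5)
Your argument is correct, and it takes a genuinely different route from the paper.

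\textbf{The paper's route.} The paper stays inside Rayleigh--Ritz. It factors $x^T(aD+bA)x=\bigl(1+\frac{b\,x^TAx}{a\,x^TDx}\bigr)a\,x^TDx$ and says the minimum over unit vectors vanishes iff the minimum of the first factor vanishes. It then pulls $b/a$ out of the minimum to reach $\min_{y\neq 0}y^TAy/y^TDy=\lambda_n(\mathcal{A})$ from Lemma~\ref{chung}. These two steps silently need two sign conditions:
\begin{itemize}
\item $a>0$, so that $a\,x^TDx>0$; for $a<0$ the first step turns the minimum into a maximum;
\item $b/a>0$; otherwise the minimum of $\frac{b}{a}\cdot\frac{x^TAx}{x^TDx}$ is $\frac{b}{a}$ times the \emph{maximum} of the quotient, which is $1$.
\end{itemize}

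\textbf{Your route.} The congruence $aD+bA=D^{1/2}(a\identity+b\mathcal{A})D^{1/2}$ with Sylvester's law of inertia turns the question into an exact one about the numbers $a+b\lambda_i(\mathcal{A})$. The role of the sign of $b$ is then visible immediately, which is what lets you see that the statement as written is false for $b<0$. Two concrete failures:
\begin{itemize}
\item $a=1,b=-1$ gives $L$, with $\lambda_n(L)=0$ but $\lambda_n(\mathcal{A})\leq -\frac{1}{n-1}\neq 1$;
\item $a=b=-1$ on $K_2$ gives $\lambda_n(\mathcal{A})=-1=-a/b$, while $\lambda_n(-(D+A))=-2$.
\end{itemize}

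Your restricted version ($b>0$, with the case $a<0$ vacuous) is the right statement. It covers both places the paper invokes the theorem: $a=\alpha,\ b=1-\alpha$, and $a=1-\alpha,\ b=2\alpha-1$ for $\alpha\in[\frac{2}{3},1)$. So the downstream results survive.

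\textbf{Comparison.} The paper's route is more elementary and reuses Chung's quotient characterization directly. Yours costs one appeal to the inertia theorem but gives the complete picture, including the corrected condition $a=-b$ when $b<0$.

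One fact you use without justification is that $1$ is the \emph{largest} eigenvalue of $\mathcal{A}$. It follows from positive semidefiniteness of $\mathcal{L}$, or from Perron--Frobenius with the positive eigenvector $D^{1/2}\allones$.
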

\begin{proof}
    Since $G$ is undirected, $aD+bA$ is symmetric such that Rayleigh-Ritz guarantees the following:
    \begin{align*}
        \lambda_n(aD+bA)&=\min_{\substack{x\neq 0\\
        ||x||=1}} (ax^TDx+bx^TAx)\\
    \end{align*}
    Since $\delta(G)\geq 1$, $x^TDx> 0$ when $x\neq 0$. So,
    \begin{align*}
       \min_{\substack{x\neq 0\\
        ||x||=1}} (ax^TDx+bx^TAx)&=\min_{\substack{x\neq 0\\
        ||x||=1}}\left(1+\frac{bx^TAx}{ax^TDx}\right)ax^TDx\\
    \end{align*}
    Since, $ax^TDx\neq 0$, the minimum of the product is zero if and only if the minimum of $1+\frac{bx^TAx}{ax^TDx}$ is zero. Then, using Lemma \ref{chung}, we have our result:
    \begin{align*}
        \min_{\substack{x\neq 0\\
        ||x||=1}}\left(1+\frac{bx^TAx}{ax^TDx}\right)=0
        &\iff \min_{\substack{x\neq 0\\
        ||x||=1}}\frac{bx^TAx}{ax^TDx}=-1\\
        &\iff \min_{\substack{x\neq 0\\
        ||x||=1}}\frac{x^TAx}{x^TDx}=-\frac{a}{b}\\
        &\iff \lambda_n(\mathcal{A})=-\frac{a}{b}
    \end{align*}
\end{proof}
 Equipped with the above theorem, we can find where the transition from PSD to not-PSD will occur for certain choices of $a$ and $b$. For example, resolving Problem 8 of \cite{niki17}:
\begin{theorem}
    For an undirected graph $G$ with minimum degree 1, the minimum $\alpha\in (0,1)$ such that $A_{\alpha}$ is PSD is $\alpha_0(G)=\frac{\lambda_n(\mathcal{A})}{\lambda_n(\mathcal{A})-1}$
\end{theorem}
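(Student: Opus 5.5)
Write $A_\alpha=\alpha D+(1-\alpha)A$, so $a=\alpha$ and $b=1-\alpha$ in Theorem \ref{general}, both nonzero for $\alpha\in(0,1)$. Rather than use Theorem \ref{general} only at the transition point, I will use the same Rayleigh-quotient factorization to get the sign of $\lambda_n(A_\alpha)$ for every $\alpha$. Write $\lambda=\lambda_n(\mathcal{A})$; by Lemma \ref{chung}, $-1\le \lambda\le -\frac{1}{n-1}<0$ since $\delta(G)\ge 1$. I read "minimum degree 1" as $\delta(G)\ge 1$, which is the hypothesis of Lemma \ref{chung}.

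First, the monotonicity reduction. Since $x^TDx>0$ for $x\neq 0$, we have
\begin{align*}
x^TA_\alpha x=\bigl(\alpha+(1-\alpha)\tfrac{x^TAx}{x^TDx}\bigr)x^TDx .
\end{align*}
Hence $A_\alpha$ is PSD if and only if $\alpha+(1-\alpha)r\ge 0$ for every value $r$ of the quotient $\frac{y^TAy}{y^TDy}$. Because $1-\alpha>0$, this reduces to checking the minimal value of the quotient. By Lemma \ref{chung} that minimum is $\lambda$, and it is attained. So $A_\alpha$ is PSD if and only if $\alpha+(1-\alpha)\lambda\ge 0$, i.e.
\begin{align*}
\alpha(1-\lambda)\ge -\lambda, \quad\text{equivalently}\quad \alpha\ge \frac{-\lambda}{1-\lambda}=\frac{\lambda}{\lambda-1}=\alpha_0(G).
\end{align*}
The division by $1-\lambda$ preserves the inequality because $1-\lambda>0$.

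Next, I check that $\alpha_0(G)$ lies in $(0,1)$. From $-1\le\lambda<0$ we get $\alpha_0=\frac{-\lambda}{1-\lambda}\in(0,\tfrac12]$. So the set of $\alpha\in(0,1)$ for which $A_\alpha$ is PSD is exactly $[\alpha_0,1)$, and its minimum is $\alpha_0(G)$. At $\alpha=\alpha_0$ the smallest eigenvalue is exactly $0$, which matches Theorem \ref{general} with $-a/b=-\alpha_0/(1-\alpha_0)=\lambda$. This serves as a consistency check and also shows the minimum is attained rather than only an infimum.

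The main obstacle is the careful "iff" at the level of signs, not just at zero. Theorem \ref{general} only characterizes where $\lambda_n=0$, and it does not by itself show PSD fails below $\alpha_0$ or holds above it. The factorization argument above is what fills this gap. It relies on two facts: the minimum of the generalized Rayleigh quotient is attained and equals $\lambda$ (Lemma \ref{chung}), and the factor $x^TDx$ is strictly positive. For $\alpha<\alpha_0$, taking $y$ to be a minimizer gives $y^TA_\alpha y<0$, so $A_\alpha$ is not PSD.
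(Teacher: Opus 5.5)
Your proof is correct, but it closes the argument differently from the paper. The paper uses Theorem~\ref{general} only to locate the unique $\alpha\in(0,1)$ with $\lambda_n(A_\alpha)=0$. It then imports from Nikiforov that $\alpha\mapsto\lambda_n(A_\alpha)$ is non-decreasing and continuous, which gives the sign on either side of that point. You instead apply the factorization $x^TA_\alpha x=\bigl(\alpha+(1-\alpha)\tfrac{x^TAx}{x^TDx}\bigr)x^TDx$ at every $\alpha$. The sign of $\lambda_n(A_\alpha)$ is then read off from $\alpha+(1-\alpha)\lambda_n(\mathcal{A})$, and the whole PSD set $[\alpha_0,1)$ follows with no monotonicity input.

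Your route has three advantages. It is self-contained, using only Lemma~\ref{chung}. It checks explicitly that $\alpha_0\in(0,\tfrac12]$, whereas the paper leaves the existence of the crossing inside $(0,1)$ implicit. And it makes visible the positivity of $1-\alpha$ and $x^TDx$ that the factorization genuinely requires. The hypothesis $a,b\neq 0$ in Theorem~\ref{general} does not capture this: for example, $a=1$, $b=-1$ gives $\lambda_n(L)=0$ while $\lambda_n(\mathcal{A})\neq 1$.

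The paper's route buys brevity and one lemma shared by the $A_\alpha$ and $B_\alpha$ results. Your method transfers just as easily, though. For $\alpha\in(\tfrac12,1)$ you have $B_\alpha=(1-\alpha)D+(2\alpha-1)A$ with $2\alpha-1>0$, and the same argument shows $B_\alpha$ is PSD if and only if $\alpha\le\frac{\lambda_n(\mathcal{A})-1}{2\lambda_n(\mathcal{A})-1}$.
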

\begin{proof}
    First, since $\alpha>0$ and $1-\alpha>0$ for $\alpha\in (0,1)$, Theorem \ref{general} guarantees $\lambda_n(\alpha D+(1-\alpha)A)=0$ only when $\lambda_n(\mathcal{A})=\frac{\alpha}{\alpha-1}$. Second, as a consequence of Propositions 3 and 4 of \cite{niki17}, $\lambda_{n}(A_{\alpha})$ is non-decreasing and continuous in $\alpha$. Therefore, the minimum $\alpha$ such that $A_{\alpha}$ is PSD is $\alpha_0(G)=\frac{\lambda_n(\mathcal{A})}{\lambda_n(\mathcal{A})-1}$.
\end{proof}

Resolving Problem 2 of \cite{past25}:
\begin{theorem}
For an undirected graph $G$ with minimum degree 1, the maximum $\alpha \in (0,1)$ such that $B_{\alpha}$ is PSD is $\beta_0(G)=\frac{\lambda_n(\mathcal{A})-1}{2\lambda_n(\mathcal{A})-1}$
\end{theorem}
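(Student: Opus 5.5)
We follow the template of the preceding theorem, using the reformulation $B_\alpha=(1-\alpha)D+(2\alpha-1)A$, so that $B_\alpha=aD+bA$ with $a=1-\alpha$ and $b=2\alpha-1$.

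\textbf{Step 1: the zero crossing.} For $\alpha\in(0,1)$ with $\alpha\neq\frac12$ we have $a,b\neq 0$, so Theorem \ref{general} applies. It gives $\lambda_n(B_\alpha)=0$ if and only if
\[
\lambda_n(\mathcal{A})=-\frac{1-\alpha}{2\alpha-1}.
\]
Write $\lambda=\lambda_n(\mathcal{A})$ and solve for $\alpha$: from $\lambda(2\alpha-1)=\alpha-1$ we get $\alpha(2\lambda-1)=\lambda-1$, so $\alpha=\frac{\lambda-1}{2\lambda-1}=\beta_0(G)$. By Lemma \ref{chung}, $\lambda\in[-1,-\frac1{n-1}]$, hence $2\lambda-1<0$ and $\lambda-1<0$. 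Therefore $\beta_0\in[\frac23,1)$, which lies in $(\frac12,1)$, so the excluded case $\alpha=\frac12$ never matters. (At $\alpha=\frac12$ we have $B=\frac12D$, which is PSD.)

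\textbf{Step 2: monotonicity.} Rather than cite an external result, we argue directly. Writing $B_\alpha=L+\alpha(2A-D)$, the map $\alpha\mapsto\lambda_n(B_\alpha)$ is a minimum over unit $x$ of functions affine in $\alpha$. Hence it is concave and continuous in $\alpha$, and $\lambda_n(B_0)=\lambda_n(L)=0$. Since $G$ has an edge, $\lambda_n(B_1)=\lambda_n(A)<0$.

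\textbf{Step 3: the PSD set.} A concave function that is $\ge 0$ at $0$ and $<0$ at $1$ has $\{\alpha\in[0,1]:\lambda_n(B_\alpha)\ge 0\}$ equal to an interval $[0,\alpha^*]$, with $\lambda_n(B_{\alpha^*})=0$ by continuity and $\alpha^*<1$.

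\textbf{Step 4: identifying $\alpha^*$.} We need $\alpha^*>0$, so that $\alpha^*\in(0,1)$ and Step 1 applies to it. This is the main subtlety, since concavity alone allows the PSD set to collapse to $\{0\}$. Corollary \ref{posidef} gives PSD on $(0,\frac23)$, so $\alpha^*\ge\frac23>\frac12$. Then Theorem \ref{general} forces $\alpha^*=\beta_0(G)$.

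Alternatively, we can avoid that corollary. For $\alpha\in(\frac12,1)$, $x^TB_\alpha x=(1-\alpha)x^TDx+(2\alpha-1)x^TAx\ge\big((1-\alpha)+(2\alpha-1)\lambda\big)x^TDx$ by Lemma \ref{chung}, which is nonnegative exactly when $\alpha\le\beta_0$. For $\alpha\le\frac12$, both terms are nonnegative because $x^TAx\ge-x^TDx$ and $1-\alpha\ge 1-2\alpha$; more precisely, $(1-\alpha)x^TDx+(2\alpha-1)x^TAx\ge(1-\alpha-(1-2\alpha))x^TDx=\alpha\,x^TDx\ge0$. Equality in Lemma \ref{chung} shows PSD fails just past $\beta_0$, by taking $y$ attaining the minimum Rayleigh quotient. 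This direct inequality argument is cleaner and is the route we would actually write.
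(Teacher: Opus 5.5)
Your proposal is correct. Steps 1--4 are essentially the paper's proof. Both use Corollary \ref{posidef} on $(0,\frac{2}{3})$, $\lambda_n(A)<0$ at $\alpha=1$, continuity, and then Theorem \ref{general} with $1-\alpha>0$ and $2\alpha-1>0$ to identify the zero as $\beta_0$. The only difference is that your concavity argument replaces the citation of Theorem 3.1 of \cite{sam24}. It also makes explicit that the PSD set is an interval $[0,\alpha^*]$, which the paper leaves implicit.

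The direct Rayleigh-quotient route you say you would actually write is genuinely different. It uses only $\lambda_n(\mathcal{A})=\min_{y\neq 0} y^TAy/y^TDy$ and the fact that this minimum is attained. It needs no appeal to Corollary \ref{posidef}, continuity, or Theorem \ref{general}. It buys a self-contained proof of the whole picture: for $\alpha\in[0,1]$, $B_\alpha$ is PSD exactly when $\alpha\le\beta_0$ and positive definite on $(0,\beta_0)$, and the $\frac{2}{3}$ bound of \cite{sam24} is recovered from $\beta_0\ge\frac{2}{3}$. It also sidesteps Theorem \ref{general}, whose proof tacitly needs $a,b>0$. As stated, that theorem fails for $a=1$, $b=-1$, since $\lambda_n(L)=0$ while $\lambda_n(\mathcal{A})\neq 1$. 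So your Step 1 should not invoke it for $\alpha<\frac{1}{2}$, although nothing downstream uses that case. The paper's route buys brevity by leaning on prior results. Incidentally, your explicit solve confirms the stated $\beta_0$; the last line of the paper's proof misprints it as the formula for $\alpha_0$.

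There is one slip to fix in the direct route. For $\alpha\le\frac{1}{2}$ the coefficient $2\alpha-1$ is nonpositive, so $x^TAx\ge -x^TDx$ yields an upper bound on $(2\alpha-1)x^TAx$, not the lower bound you display. That term also need not be nonnegative. What you need is $x^TAx\le x^TDx$, i.e.\ $L$ is PSD. Equivalently, $B_\alpha=(1-2\alpha)L+\alpha D$ is a nonnegative combination of PSD matrices, which gives $x^TB_\alpha x\ge\alpha\,x^TDx$ exactly as you wrote.
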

\begin{proof}
    By Corollary \ref{posidef}, $B_{\alpha}$ is positive definite for all $\alpha\in (0,\frac{2}{3})$. That is, $\lambda_n((1-\alpha)D+(2\alpha-1)A)>0$ for $\alpha\in (0,\frac{2}{3})$. Moreover, at $\alpha = 1$, since the minimum degree of $G$ is 1, there is at least one edge which implies $\lambda_n((1-1)D+(2\cdot 1-1)A)=\lambda_n(A)<0$. So, as a consequence of Theorem 3.1 of \cite{sam24}, the continuity of $\lambda_n(B_{\alpha})$ implies $\lambda_n(B_{\alpha})=0$ for some $\alpha \in [\frac{2}{3},1)$.
    
    Since $1-\alpha >0$ and $2\alpha -1>0$ for all $\alpha\in [\frac{2}{3},1)$, Theorem \ref{general} guarantees $\lambda_n((1-\alpha) D+(2\alpha-1)A)=0$ only when $\lambda_n(\mathcal{A})=\frac{\alpha-1}{2\alpha-1}$. Therefore, the maximum $\alpha$ such that $B_{\alpha}$ is PSD is $\beta_0(G)=\frac{\lambda_n(\mathcal{A})}{\lambda_n(\mathcal{A})-1}$.
\end{proof}

The two above theorems lead to the following solution to Problem 3 of \cite{past25}:
\begin{cor}
    For an undirected graph $G$ with minimum degree 1, $\epsilon(G)=\beta_0(G)-\alpha_0(G)=\frac{\lambda_n(\mathcal{A})-1}{2\lambda_n(\mathcal{A})-1}-\frac{\lambda_n(\mathcal{A})}{\lambda_n(\mathcal{A})-1}$
\end{cor}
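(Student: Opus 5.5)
This corollary follows by combining the two preceding theorems, once we fix what $\epsilon(G)$ denotes in Problem 3 of \cite{past25}: the length of the interval of $\alpha$ on which both $A_\alpha$ and $B_\alpha$ are PSD, namely $\beta_0(G)-\alpha_0(G)$. The plan is to take the two thresholds from those theorems and check that this difference is a genuine, nonnegative gap.

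Write $\lambda=\lambda_n(\mathcal{A})$. By Lemma \ref{chung}, $\lambda\in[-1,-\tfrac{1}{n-1}]$ whenever $\delta(G)\ge 1$. This range guarantees that both $\lambda-1$ and $2\lambda-1$ are negative, hence nonzero, so both expressions below are well defined.

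\textbf{Step 1: the lower threshold.} The theorem resolving Problem 8 of \cite{niki17} gives the smallest $\alpha$ for which $A_\alpha$ is PSD:
\begin{align*}
\alpha_0(G)=\frac{\lambda}{\lambda-1}.
\end{align*}
It also tells us that $A_\alpha$ is PSD for every $\alpha\ge\alpha_0(G)$, by monotonicity.

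\textbf{Step 2: the upper threshold.} The $B_\alpha$ theorem gives the largest $\alpha$ for which $B_\alpha$ is PSD. The proof of that theorem ends by writing the threshold as $\lambda/(\lambda-1)$, but the derivation itself shows it is the value solving $\lambda=\frac{\alpha-1}{2\alpha-1}$. Solving this equation gives
\begin{align*}
\beta_0(G)=\frac{\lambda-1}{2\lambda-1},
\end{align*}
which matches the theorem's statement, so I would use this form. Continuity of $\lambda_n(B_\alpha)$, together with the fact that $\lambda_n(B_\alpha)$ is positive on $(0,\tfrac23)$, then shows that $B_\alpha$ is PSD on all of $(0,\beta_0]$.

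\textbf{Step 3: the interval is nonempty.} To confirm that $\epsilon(G)$ is the length of $[\alpha_0,\beta_0]$, check that $\alpha_0\le\tfrac12\le\tfrac23\le\beta_0$ for $\lambda\in[-1,0)$:
\begin{itemize}
\item $\alpha_0=\frac{-\lambda}{1-\lambda}$ is increasing in $-\lambda$, so on this range it is at most $\tfrac12$.
\item $\beta_0=\frac{1-\lambda}{1-2\lambda}$ is at least $\tfrac23$ exactly when $3-3\lambda\ge 2-4\lambda$, i.e. when $\lambda\ge -1$, which holds.
\end{itemize}
Subtracting the two thresholds then gives the displayed formula for $\epsilon(G)$.

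The main obstacle is Step 2: making sure the inconsistent final line in the $B_\alpha$ proof does not propagate into the corollary. The remedy is to re-derive $\beta_0$ directly from Theorem \ref{general} with $a=1-\alpha$ and $b=2\alpha-1$, as above. If desired, the difference can also be combined over the common denominator $(2\lambda-1)(\lambda-1)$.
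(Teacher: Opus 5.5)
Your proposal is correct and matches the paper. The paper gives no separate argument for this corollary; it simply subtracts the thresholds $\alpha_0(G)$ and $\beta_0(G)$ from the two preceding theorems. Your extra checks are also correct and go slightly beyond what the paper states: the fix that $\beta_0=\frac{\lambda_n(\mathcal{A})-1}{2\lambda_n(\mathcal{A})-1}$ despite the last line of the $B_\alpha$ proof, and the verification that $\alpha_0\le\frac12\le\frac23\le\beta_0$, so the difference is a genuine nonnegative gap.
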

Figure \ref{fig:onlyfigure} demonstrates these transitions.
\begin{figure}[h]
    \centering
        \includegraphics[width=.8\linewidth]{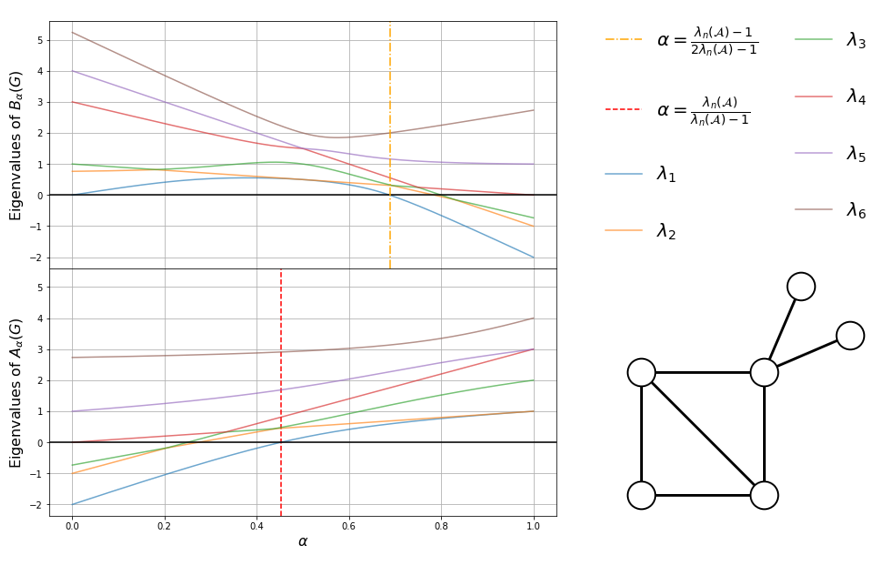}
    \caption{The transition from PSD to not-PSD is illustrated using the example graph from \cite{past25}. Top plot:  $B_{\alpha}$ eigenvalues as $\alpha$ varies. Bottom plot: $A_{\alpha}$ eigenvalues as $\alpha$ varies. Bottom right: the example graph.}
    \label{fig:onlyfigure}
\end{figure}

\section{Eigenvectors \& Graph Perturbation}

In this section, we address restrictions on eigenvectors of $B_{\alpha}$ that guarantee they are also eigenvectors after structural changes to graphs. First, we will cover an entry-wise view on the eigenequation:

\begin{prop}\label{entrywiseb}
    $[Bx]_i=\alpha \sum\limits_{j\sim i} x_j+(1-\alpha)\sum\limits_{j\sim i}x_i-x_j$ for all $x\in \mathbb{R}^n$
\end{prop}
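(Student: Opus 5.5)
The plan is to compute $[Bx]_i$ directly from the definition $B=\alpha A+(1-\alpha)L$, using linearity of matrix–vector multiplication to treat the two summands separately. First, $[Bx]_i=\alpha [Ax]_i+(1-\alpha)[Lx]_i$ for every $x\in\mathbb{R}^n$ and every index $i$, since $B$ is a linear combination of $A$ and $L$.

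Next I would evaluate each term entrywise. For the adjacency matrix, $[Ax]_i=\sum_{j=1}^n [A]_{ij}x_j$. By the definition of $A$, $[A]_{ij}=1$ exactly when $j\sim i$ and is $0$ otherwise, so this reduces to $\sum_{j\sim i}x_j$. Simplicity of $G$ ($i\nsim i$) ensures no diagonal term appears.

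For the Laplacian, $L=D-A$, so $[Lx]_i=[Dx]_i-[Ax]_i=d_G(i)x_i-\sum_{j\sim i}x_j$. Since $d_G(i)$ is the number of $j$ with $j\sim i$, I would rewrite $d_G(i)x_i=\sum_{j\sim i}x_i$. Combining the two sums over the same index set gives $\sum_{j\sim i}(x_i-x_j)$, which is how the expression in the statement should be read.

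Substituting both pieces into the first identity yields the claim. There is no real obstacle here. The only point needing care is the bookkeeping step of expressing the degree term as a sum over neighbours, which is exactly what allows the Laplacian contribution to be written as $\sum_{j\sim i}(x_i-x_j)$.
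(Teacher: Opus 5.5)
Your proposal is correct and matches the paper's proof: both split $[Bx]_i=\alpha[Ax]_i+(1-\alpha)[Lx]_i$ by linearity, reduce $[Ax]_i$ to $\sum_{j\sim i}x_j$ from the definition of $A$, and write $d_G(i)x_i=\sum_{j\sim i}x_i$ to get $[Lx]_i=\sum_{j\sim i}(x_i-x_j)$. Your added remark that simplicity rules out a diagonal term, and your reading of the second sum as $\sum_{j\sim i}(x_i-x_j)$, are small clarifications the paper leaves implicit.
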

\begin{proof}
    We know $[Bx]_i=\alpha[Ax]_i+(1-\alpha)[Lx]_i$. Using the entry-wise matrix multiplication formula, it follows from the definitions of $A$ and $L$ that
    \begin{align*}
        [Ax]_i = \sum_{j=1}^nA_{ij}x_j = \sum_{j\nsim i}0\cdot x_j+\sum_{j\sim i}1\cdot x_j=\sum_{j\sim i}x_j\\
        [Lx]_i=[Dx]_i-[Ax]_i=d_ix_i-\sum_{j\sim i}x_j=\sum_{j\sim i}x_i-x_j
    \end{align*}
    Substituting in to $B=\alpha A+(1-\alpha) L$, the result follows.
\end{proof}
\begin{lem}\label{lemmaineedrn}
    For $x\in \mathbb{R}^n$, $Bx=\lambda x$ if and only if $[Bx]_i=\lambda x_i$ for all $i$
\end{lem}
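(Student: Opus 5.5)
This statement is essentially the definition of vector equality applied to the eigenequation, so the plan is to unwind both sides coordinatewise. Two vectors in $\mathbb{R}^n$ are equal if and only if they agree in every coordinate. Hence $Bx=\lambda x$ holds exactly when $[Bx]_i=[\lambda x]_i$ for each $i\in\{1,\dots,n\}$.

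The one computation needed is that scalar multiplication acts entrywise, so $[\lambda x]_i=\lambda x_i$. Substituting this gives the claimed equivalence. I will write the forward direction by taking the $i$-th coordinate of both sides of $Bx=\lambda x$. For the reverse direction, I will note that the vectors $Bx$ and $\lambda x$ then have identical coordinates, so they are equal.

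Although it is not needed for the proof, it is worth recording how the lemma will be used. Combined with Proposition \ref{entrywiseb}, it says that $x$ is an eigenvector of $B$ exactly when
\begin{align*}
\alpha\sum_{j\sim i}x_j+(1-\alpha)\sum_{j\sim i}(x_i-x_j)=\lambda x_i \quad\text{for all } i.
\end{align*}
The later perturbation arguments about edge and vertex removal should then reduce to checking these local vertex conditions.

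There is no real obstacle here. The only care required is to keep the quantifier ``for all $i$'' explicit, since the reverse implication depends on having equality in every coordinate.
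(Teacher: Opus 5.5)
Your proof is correct: the lemma is just coordinatewise equality of vectors in $\mathbb{R}^n$ together with $[\lambda x]_i=\lambda x_i$. The paper states it without proof because it is immediate in exactly this way, so your argument matches the intended reasoning.
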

Thus, throughout this section, we will utilize the following corollary.
\begin{cor}\label{corineedrn}
    For $x\in \mathbb{R}^n$, $Bx=\lambda x$ if and only if $\alpha \sum\limits_{j\sim i} x_j+(1-\alpha)\sum\limits_{j\sim i}x_i-x_j=\lambda x_i$ for all $i$.
\end{cor}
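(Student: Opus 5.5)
This is immediate from combining Lemma \ref{lemmaineedrn} with Proposition \ref{entrywiseb}. The plan is to rewrite the vector identity $Bx=\lambda x$ as a family of scalar identities, one per vertex, and then replace the left-hand side of each by its explicit neighborhood form.

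First, by Lemma \ref{lemmaineedrn}, $Bx=\lambda x$ holds if and only if $[Bx]_i=\lambda x_i$ for every $i\in\{1,\dots,n\}$. This is just the statement that two vectors in $\mathbb{R}^n$ are equal exactly when they agree in every coordinate. Second, Proposition \ref{entrywiseb} holds for every $x\in\mathbb{R}^n$ and every index $i$, and it gives
\[
[Bx]_i=\alpha \sum_{j\sim i} x_j+(1-\alpha)\sum_{j\sim i}(x_i-x_j).
\]
Substituting this expression into each coordinate condition yields, for each fixed $i$, the equivalence
\[
[Bx]_i=\lambda x_i \iff \alpha \sum_{j\sim i} x_j+(1-\alpha)\sum_{j\sim i}(x_i-x_j)=\lambda x_i .
\]
Taking the conjunction over all $i$ and chaining with the first equivalence gives both directions of the corollary at once.

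I expect no real obstacle. The only point needing care is notational: the summand $x_i-x_j$ in the statement must be read as $(x_i-x_j)$ inside the sum over $j\sim i$, exactly as in the proof of Proposition \ref{entrywiseb}, where $\sum_{j\sim i}(x_i-x_j)=d_ix_i-\sum_{j\sim i}x_j=[Lx]_i$. I would say this explicitly so the substitution is unambiguous.
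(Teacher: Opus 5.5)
Your proposal is correct and matches the paper, which states this corollary without a separate proof as the immediate combination of Lemma~\ref{lemmaineedrn} (coordinatewise equality) with the entrywise formula of Proposition~\ref{entrywiseb}. Your remark that the summand must be read as $(x_i-x_j)$ is a sensible clarification of the paper's notation.
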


\subsection{Perturbation of Edges}
Let $H$ be a subgraph of $G$ such that $E(H)\subseteq E(G)$ and $V(H)\subseteq V(G)$. Likewise, let $G-H$ be the subgraph of $G$ obtained by removing the edges $E(H)$ from $E(G)$. Note that $B(G)=B(H)+B(G- H)$.
\begin{theorem}\label{edgeremoval}
    Let $B(G)x=\lambda x$ and $\lambda'$ be some scalar. Then, the following are equivalent:
    \begin{enumerate}
        \item $B(H)x=\lambda'x$
        \item $B(G- H)x=(\lambda-\lambda')x$
        \item $\alpha \sum\limits_{\substack{j\sim i\\
        (i,j)\in E(H)}}x_j+(1-\alpha)\sum\limits_{\substack{j\sim i\\
        (i,j)\in E(H)}}x_i-x_j=\lambda' x_i$ for all $i$
        \item $\alpha \sum\limits_{\substack{j\sim i\\
        (i,j)\in E(G)\setminus E(H)}}x_j+(1-\alpha)\sum\limits_{\substack{j\sim i\\
        (i,j)\in E(G)\setminus E(H)}}x_i-x_j=(\lambda-\lambda')x_i$ for all $i$
    \end{enumerate}
\end{theorem}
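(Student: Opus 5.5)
The plan is to prove the chain (1)$\iff$(2), (1)$\iff$(3), and (2)$\iff$(4). The backbone is the additive decomposition $B(G)=B(H)+B(G-H)$ noted just before the statement. Here $H$ and $G-H$ are both regarded as graphs on the full vertex set $V(G)$, so that all three matrices lie in $M_n$ and the same vector $x$ can be applied to each. I would first verify this decomposition briefly. Since $E(G)$ is the disjoint union of $E(H)$ and $E(G)\setminus E(H)$, we get $A(G)=A(H)+A(G-H)$. Because degrees add over a disjoint edge partition, $D(G)=D(H)+D(G-H)$ as well. Hence $L$ is additive, and so $B=\alpha A+(1-\alpha)L$ is additive for any fixed $\alpha$.

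For (1)$\iff$(2), assume $B(G)x=\lambda x$. Then
\begin{align*}
B(G-H)x=B(G)x-B(H)x=\lambda x-B(H)x.
\end{align*}
Consequently $B(H)x=\lambda' x$ holds if and only if $B(G-H)x=(\lambda-\lambda')x$. This is pure linear algebra and needs nothing else.

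For (1)$\iff$(3), apply Corollary \ref{corineedrn}, which rests on Proposition \ref{entrywiseb} and Lemma \ref{lemmaineedrn}, to the graph $H$ in place of $G$. Adjacency in $H$ means exactly $j\sim i$ with $(i,j)\in E(H)$. The $i$-th entry of $B(H)x$ is therefore the displayed sum in (3), and the componentwise condition is equivalent to $B(H)x=\lambda' x$. The same argument applied to $G-H$ gives (2)$\iff$(4), since its edge set is $E(G)\setminus E(H)$.

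The only point needing care is the convention issue. If $V(H)\subsetneq V(G)$, then $B(H)$ must be understood as padded with zero rows and columns, i.e. vertices outside $V(H)$ are treated as isolated. Under that convention, (1) and (3) at such a vertex $i$ both reduce to $0=\lambda' x_i$, so they stay consistent. Once this convention is made explicit, every step is routine.
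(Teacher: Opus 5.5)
Your proposal is correct and follows the paper's route. The additive splitting $B(G)=B(H)+B(G-H)$ gives (1)$\iff$(2) by pure linear algebra, and the entrywise formula of Proposition~\ref{entrywiseb}, via Corollary~\ref{corineedrn}, applied to $H$ and $G-H$ gives (1)$\iff$(3) and (2)$\iff$(4). Your check of the additivity of $A$, $D$, $L$ and your zero-padding convention for $V(H)\subsetneq V(G)$ make explicit what the paper leaves implicit, and your converse step correctly yields $B(H)x=\lambda' x$, where the paper slips and writes $\lambda x-B(H)x=0$.
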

\begin{proof}
    We know $B(G)=B(H)+B(G- H)$ implying $B(G)x=B(H)x+B(G- H)x$, or equivalently, $B(G)x-B(H)x=B(G- H)x$. If $B(H)x=\lambda' x$, then $B(G)x-B(H)x=\lambda x-\lambda' x=(\lambda-\lambda')x= B(G- H)$. Likewise, if $B(G- H)=(\lambda-\lambda')x=B(G)x-B(H)x$, then $\lambda x-B(H)x=0$. Thus, 1 is equivalent to 2. Considering Proposition \ref{entrywiseb}, we have the result.
\end{proof}

\begin{cor}\label{preserving}
    Let $B(G)x=\lambda x$. Then, $B(H)x=\lambda x$ if and only if $B(G-H)x=0$.
\end{cor}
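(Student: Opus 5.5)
The plan is to obtain Corollary \ref{preserving} directly from Theorem \ref{edgeremoval} by choosing the scalar $\lambda'=\lambda$. The hypothesis $B(G)x=\lambda x$ is the standing assumption of that theorem, so once $\lambda'$ is fixed the equivalence of conditions 1 and 2 there is exactly the statement we want.

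Concretely, I would set $\lambda'=\lambda$. Condition 1 of Theorem \ref{edgeremoval} then reads $B(H)x=\lambda x$. Condition 2 reads $B(G-H)x=(\lambda-\lambda)x=0$. Since the theorem asserts that conditions 1 and 2 are equivalent for every scalar $\lambda'$, the corollary follows.

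To be safe against any gap in the theorem's written proof, I would also include the one-line direct argument. It uses the decomposition $B(G)=B(H)+B(G-H)$, which holds because $A$, $D$, and hence $L$ and $B$, are additive over edge-disjoint edge sets on the common vertex set $V(G)$. This gives $B(G-H)x=B(G)x-B(H)x=\lambda x-B(H)x$. The right-hand side vanishes if and only if $B(H)x=\lambda x$.

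There is no real obstacle here. The only point needing care is that $B(H)$ must be interpreted as an $n\times n$ matrix on all of $V(G)$, with vertices outside $V(H)$ treated as isolated, so that the sum $B(H)+B(G-H)$ makes sense. I would state this convention explicitly.
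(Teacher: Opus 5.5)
Your proposal is correct and is the paper's own route. The paper states the corollary without proof as the case $\lambda'=\lambda$ of conditions 1 and 2 in Theorem \ref{edgeremoval}, and your backup argument via $B(G)=B(H)+B(G-H)$ (with $B(H)$ padded to all of $V(G)$) is exactly the computation used in that theorem's proof.
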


In the case where $H$ is a single edge, we obtain further insight to the relationship between graph structure and eigenvector entry:

\begin{theorem}\label{singleedge}
    Let $B(G)x=\lambda x$ where $G$ is a non-empty, undirected graph $G$. Then, for any edge $\{u,v\}\in E(G)$,
    \begin{enumerate}
        \item $B(G-\{u,v\})x=\lambda x$ if and only if $(1-\alpha)(x_u-x_v)=-\alpha x_v=\alpha x_u$.
        \item $B(G-\{u,v\})x=\lambda'x$ where $\lambda'\neq \lambda$ if and only if $x_i=0$ for all $i\neq u,v$ and $x_u=-x_v\neq 0$. Moreover, the following must hold: 
        \begin{enumerate}
            \item $u$ and $v$ must have the same degree
            \item $\lambda'=(1-\alpha)(d_G(u)-1)=(1-\alpha)(d_G(v)-1)$
            \item $\lambda=(1-\alpha)(d_G(u)-1)+2-3\alpha=(1-\alpha)(d_G(v)-1)+2-3\alpha$
        \end{enumerate} 
    \end{enumerate}
\end{theorem}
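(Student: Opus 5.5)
Take $H$ to be the single edge $\{u,v\}$ (with vertex set $V(G)$), so that $G-H=G-\{u,v\}$, and apply Theorem \ref{edgeremoval} and Corollary \ref{preserving}. The key computation is $B(H)x$. By Proposition \ref{entrywiseb} applied to $H$, we have $[B(H)x]_i=0$ for $i\neq u,v$, together with
\begin{align*}
[B(H)x]_u=\alpha x_v+(1-\alpha)(x_u-x_v),\qquad [B(H)x]_v=\alpha x_u+(1-\alpha)(x_v-x_u).
\end{align*}

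\textbf{Part 1.} By Corollary \ref{preserving} with the roles of $H$ and $G-H$ exchanged, $B(G-\{u,v\})x=\lambda x$ holds if and only if $B(H)x=0$. Written entrywise, this says $(1-\alpha)(x_u-x_v)=-\alpha x_v$ and $(1-\alpha)(x_v-x_u)=-\alpha x_u$. The second equation is equivalent to $(1-\alpha)(x_u-x_v)=\alpha x_u$, so together they give exactly the chain $(1-\alpha)(x_u-x_v)=-\alpha x_v=\alpha x_u$.

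\textbf{Part 2.} By Theorem \ref{edgeremoval}, $B(G-\{u,v\})x=\lambda' x$ holds if and only if $B(H)x=(\lambda-\lambda')x$. Put $\mu=\lambda-\lambda'\neq 0$.
\begin{itemize}
\item Since $[B(H)x]_i=0$ for $i\neq u,v$ and $\mu\neq 0$, we get $x_i=0$ for all $i\neq u,v$.
\item On $\{u,v\}$, the vector $(x_u,x_v)$ must be an eigenvector of the $2\times 2$ matrix $\begin{pmatrix}1-\alpha & 2\alpha-1\\ 2\alpha-1 & 1-\alpha\end{pmatrix}$ with eigenvalue $\mu$. Since $x\neq 0$, $(x_u,x_v)\neq(0,0)$.
\item This matrix has eigenvalue $2-3\alpha$ on $(1,-1)$ and eigenvalue $\alpha$ on $(1,1)$.
\end{itemize}

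\textbf{Main obstacle.} I must exclude the branch $x_u=x_v\neq 0$ (and handle any degenerate $\alpha$). The tool I will use is the original eigen-equation of $B(G)$ at vertex $u$, via Corollary \ref{corineedrn}. With all other entries zero, that equation reads
\begin{align*}
\alpha x_v+(1-\alpha)\bigl(d_G(u)x_u-x_v\bigr)=\lambda x_u,
\end{align*}
and the analogous equation holds at $v$. Beyond that, the eigen-equation at a neighbor $w\neq u,v$ of $u$ gives $0=\alpha x_u-(1-\alpha)x_u+\dots$, which constrains the configuration further.

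In the symmetric branch $x_u=x_v=c\neq 0$, the equation at $u$ gives $\lambda=(1-\alpha)(d_G(u)-1)+\alpha$ and $\mu=\alpha$, hence $\lambda'=(1-\alpha)(d_G(u)-1)$. I would attempt to rule this branch out using the equations at the other neighbors. I expect this step to be delicate and possibly to need extra hypotheses. If it cannot be excluded, I would check whether the stated form ($x_u=-x_v$) is really what the statement intends.

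\textbf{Antisymmetric branch.} Here $x_u=-x_v=c\neq 0$ and $\mu=2-3\alpha$. The equation at $u$ becomes
\begin{align*}
-\alpha c+(1-\alpha)(d_G(u)+1)c=\lambda c,
\end{align*}
so $\lambda=(1-\alpha)(d_G(u)+1)-\alpha=(1-\alpha)(d_G(u)-1)+2-3\alpha$. Then $\lambda'=\lambda-\mu=(1-\alpha)(d_G(u)-1)$. Doing the same computation at $v$ and comparing gives $(1-\alpha)d_G(u)=(1-\alpha)d_G(v)$, so $d_G(u)=d_G(v)$ for $\alpha\neq 1$. This yields (a), (b) and (c).

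\textbf{Converse.} Given a vector of the stated form, the computation above shows $B(H)x=(2-3\alpha)x$. Theorem \ref{edgeremoval} then gives $B(G-\{u,v\})x=\lambda' x$, and $\lambda'\neq\lambda$ provided $\alpha\neq 2/3$; I would note the need for this caveat.
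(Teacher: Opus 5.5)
\textbf{Gap: the symmetric branch actually occurs.} Your Part 1, your reduction of Part 2 to the $2\times 2$ block on $\{u,v\}$, and your antisymmetric-branch computations (including the caveat $\alpha\neq\frac23$ for the converse) are correct and follow essentially the paper's route. The gap is the one you flagged, and it cannot be closed, because the symmetric branch $x_u=x_v=c\neq 0$, $\lambda-\lambda'=\alpha$ genuinely occurs. Let $N(w)$ denote the neighborhood of $w$. The neighbor equations you planned to use read $[B(G)x]_w=(2\alpha-1)\,c\,|N(w)\cap\{u,v\}|=0$ for $w\neq u,v$. For $\alpha\neq\frac12$ they give no contradiction; they only force $\{u,v\}$ to be a $K_2$ component of $G$. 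In that case $\lambda=\alpha$ and $\lambda'=0$, so $\lambda'\neq\lambda$ whenever $\alpha\neq 0$.

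Concretely, take $G=K_2$, $x=(1,1)^T$ and any $\alpha\in(0,1]$. Then $B(G)x=\alpha x$ and $B(G-\{u,v\})x=0\cdot x$, yet $x_u\neq -x_v$. Part (c) fails too, since $\alpha\neq 2-3\alpha$ unless $\alpha=\frac12$. At $\alpha=\frac12$ the failure is broader: there $B=\frac12 D$ and your block is $\frac12$ times the identity. The standard basis vector $e_u$ is an eigenvector of $B(G)$ and of $B(G-\{u,v\})$ with eigenvalues $\frac12 d_G(u)\neq\frac12(d_G(u)-1)$, while $x_v=0$.

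\textbf{Consequence for the statement.} The ``only if'' direction of Part 2 is false as stated. The paper's proof passes from $(\lambda-\lambda'-\alpha)x_u=-(\lambda-\lambda'-\alpha)x_v$ to $x_u=-x_v$, silently discarding the case $\lambda-\lambda'=\alpha$. That discarded case is exactly your symmetric branch. What your argument does establish is a corrected statement:
\begin{itemize}
\item the forward direction of Part 2 holds whenever $\alpha\neq\frac12$ and either $\alpha=0$ (where $\lambda-\lambda'=\alpha$ contradicts $\lambda'\neq\lambda$) or $\{u,v\}$ is not an isolated edge of $G$;
\item the converse additionally needs $\alpha\neq\frac23$.
\end{itemize}

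\textbf{A small repair in the branch you finished.} At $\alpha=1$ your comparison $(1-\alpha)d_G(u)=(1-\alpha)d_G(v)$ is vacuous. Instead, apply the same neighbor equations with $x_v=-x_u$. They show that each $w\neq u,v$ is adjacent to both or neither of $u,v$ whenever $\alpha\neq\frac12$. Together with your comparison at $\alpha=\frac12$, this gives $d_G(u)=d_G(v)$ for every $\alpha$.
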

\begin{proof}
    From Theorem \ref{edgeremoval} above, we have $B(G-\{u,v\})x=\lambda' x$ if and only if $B(\{u,v\})x=(\lambda-\lambda')x$. Using Proposition \ref{entrywiseb}, this is equivalent to $[B(\{u,v\})x]_i=(\lambda-\lambda')x_i$ for all $i$. 
    
    \begin{enumerate}
        \item Consider the case where $\lambda'=\lambda$. 
        
        By Corollary \ref{preserving}, $B(G-\{u,v\})x=\lambda x$ if and only if $B(\{u,v\})x=0$. For $i\neq u$ or $v$, $i$ is an isolated vertex in $\{u,v\}$ such that $[B(\{u,v\})x]_i=0$ holds trivially. So, we need only consider $u$ and $v$. For $i=u$ or $v$, $[B(\{u,v\})]_i=0$ is equivalent to to $\alpha x_v+(1-\alpha)(x_u-x_v)=0=\alpha x_u+(1-\alpha)(x_v-x_u)$ by Theorem \ref{edgeremoval}. Thus, the equivalence in 1 holds.
        
        \item Consider the case where $\lambda'\neq \lambda$. 
        
        For all $i\neq u$ or $v$, $[B(G-\{u,v\})]_i=[B(G)]_i$ since their adjacency is not affected. As such, since $[B(G)x]_i=\lambda x_i$, $[B(G-\{u,v\})]_i=\lambda'x_i$ for $i\neq u$ or $v$ if and only if $x_i=0$.
        
        Note, since $x_i=0$ for all other vertices, either $x_u\neq 0$ or $x_v\neq 0$ for $x$ to be an eigenvector. For $u$ and $v$, Theorem \ref{edgeremoval} implies $[B(\{u,v\})x]_i=(\lambda-\lambda')x_i$ if and only if 
        \begin{align*}
            (\lambda-\lambda')x_u&=\alpha x_v+(1-\alpha)(x_u-x_v)\\
            (\lambda-\lambda')x_v&=\alpha x_u+(1-\alpha)(x_v-x_u)
        \end{align*}
        which is equivalent to $(\lambda-\lambda'-\alpha)x_u=-(\lambda-\lambda'-\alpha)x_v$. So, $[B(\{u,v\})x]_i=(\lambda-\lambda')x_i$ for $i=u$ or $v$ if and only if $x_u=-x_v\neq 0$. Thus, the equivalence in 2 holds.\\

        \begin{enumerate}
            \item    Consider $[B(G-\{u,v\})]_u$. Since $x_j=0$ for every $j\sim u$ in $G-\{u,v\}$, we have
            \begin{align*}
            &[B(G-\{u,v\})]_u = \alpha\sum_{\substack{j\sim u\\ \text{in } (G-\{u,v\})}}x_j+(1-\alpha)\sum_{\substack{j\sim u\\ \text{in } (G-\{u,v\})}}x_u-x_j\\
            \implies& \lambda' x_u= \alpha\sum_{\substack{j\sim u\\ \text{in } (G-\{u,v\})}}0+(1-\alpha)\sum_{\substack{j\sim u\\ \text{in } (G-\{u,v\})}}x_u-0\\
            \implies& \lambda' x_u=(1-\alpha)(d_G(u)-1)x_u\\
        \end{align*}
        A similar calculation reveals $[B(G-\{u,v\})x]_v=(1-\alpha)(d_G(v)-1)x_v$. So, if $B(G-\{u,v\})x=\lambda'x$ and $x_u=-x_v\neq 0$, then $(1-\alpha)(d_G(u)-1)=(1-\alpha)(d_G(v)-1)$ implying $d_G(u)=d_G(v)$.
        \item Implicitly in proof above, $\lambda'=(1-\alpha)(d_G(u)-1)=(1-\alpha)(d_G(v)-1)$ is also shown.
        \item Note that $$[B(G)x]_u=[B(G-\{u,v\}]_u+\alpha x_v+(1-\alpha)(x_u-x_v)=\lambda'x_u-\alpha x_u+2(1-\alpha)x_u$$ such that $[B(G)x]_u=\lambda x_u=(\lambda'+2-3\alpha)x_u$. A similar argument shows $[B(G)x]_v=\lambda x_v=(\lambda'+2-3\alpha)x_v$. Note that $x_i=0$ for all $i\neq u,v$ implying $[B(G)x]_i=(\lambda'+2-3\alpha)x_i$ trivially holds. Thus, since $[B(G)x]_i=\lambda x_i$ for all $i$, we have $\lambda=\lambda'+2-3\alpha$ finishing the proof.
        \end{enumerate} 
    \end{enumerate}
\end{proof}
We can also consider the removal of all edges incident to a node. 
\begin{theorem}\label{fulledges}
    Let $N_v$ be the set of edges in a graph that contain a vertex $v$. Consider a vector $x\in \mathbb{R}^n$ such that $x^T\allones =0$ and $x_v=0$ for some $v\in V(G)$. If $v$ is adjacent to every vertex $u\in V(G)$ such that $x_u\neq 0$, then $B(G)x=\lambda x$ if and only if $B(G-N_v)x=(\lambda-1+\alpha)x$.
\end{theorem}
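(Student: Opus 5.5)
The plan is to write $B(G)=B(H)+B(G-H)$ with $H$ the star formed by the edge set $N_v$. I will then show that, under the stated hypotheses on $x$, the vector $x$ is \emph{always} an eigenvector of $B(H)$ with eigenvalue $\lambda'=1-\alpha$. Once this holds, the identity $B(G)x=B(H)x+B(G-N_v)x=(1-\alpha)x+B(G-N_v)x$ immediately gives $B(G)x=\lambda x$ if and only if $B(G-N_v)x=(\lambda-1+\alpha)x$.

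I prefer this direct identity to citing Theorem \ref{edgeremoval} as stated. That theorem assumes $B(G)x=\lambda x$ from the start, so it only yields one direction. The decomposition $B(G)=B(H)+B(G-H)$ noted before it handles both directions at once.

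The main step is the entrywise computation of $B(H)x$ using Proposition \ref{entrywiseb}, applied to the graph $H$. I split the vertices into three cases.
\begin{itemize}
\item For $i\neq v$ with $i\sim v$, vertex $i$ has the single neighbor $v$ in $H$. Using $x_v=0$,
$$[B(H)x]_i=\alpha x_v+(1-\alpha)(x_i-x_v)=(1-\alpha)x_i.$$
\item For $i\neq v$ with $i\nsim v$, vertex $i$ is isolated in $H$, so $[B(H)x]_i=0$. The hypothesis that $v$ is adjacent to every vertex with nonzero entry forces $x_i=0$. Hence $[B(H)x]_i=0=(1-\alpha)x_i$ here as well.
\item For $i=v$,
$$[B(H)x]_v=\alpha\sum_{j\sim v}x_j+(1-\alpha)\sum_{j\sim v}(x_v-x_j)=(2\alpha-1)\sum_{j\sim v}x_j.$$
Every nonzero entry of $x$ sits at a neighbor of $v$, and $x_v=0$. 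So $\sum_{j\sim v}x_j=x^T\allones=0$, giving $[B(H)x]_v=0=(1-\alpha)x_v$.
\end{itemize}
By Lemma \ref{lemmaineedrn}, these three cases together give $B(H)x=(1-\alpha)x$.

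The only delicate point is the vertex $v$ itself. This is exactly where both hypotheses $x^T\allones=0$ and $x_v=0$ are needed: they convert the neighborhood sum into the full coordinate sum, which vanishes. The adjacency hypothesis is used only to kill the contributions of non-neighbors. Everything else is the routine bookkeeping above.
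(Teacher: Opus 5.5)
Your proof is correct and is essentially the paper's argument. Both use the same three-case entrywise check ($i=v$, $i\sim v$, $i\nsim v$) on the decomposition $B(G)=B(N_v)+B(G-N_v)$, closed with Lemma \ref{lemmaineedrn}; your single identity $B(N_v)x=(1-\alpha)x$ is a tidier packaging that gives both directions at once and handles non-neighbors of $v$ directly rather than through Theorem \ref{singleedge}.
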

\begin{proof}
    Note that $v$ is adjacent to no vertex in the graph $G-N_v$. So, $[B(G-N_v)x]_v=0$. Consider $[B(G)x]_v$. 
    \begin{align*}
        [B(G)x]_v&=\alpha \sum_{j\sim v}x_j+(1-\alpha)\sum_{j\sim v}x_v-x_j\\
        &=\alpha \sum_{j=1}^nx_j+(1-\alpha)\sum_{j=1}^nx_j\\
        &=\alpha x^T\allones +(1-\alpha)x^T\allones\\
        &= 0 = [B(G-N_v)x]_v
    \end{align*}
    So, $[B(G)x]_v=\lambda x_v$ if and only if $[B(G-N_v)x]_v=(\lambda-1+\alpha)x_v$  trivially.
    
    Consider $[B(G-N_v)x]_i$ for $i\sim v$. 
    \begin{align*}
        [B(G-N_v)x]_i&=[B(G)x]_i-[B(N_v)x]_i\\
        &=[B(G)x]_i-\alpha x_v-(1-\alpha)(x_i-x_v)\\
        &=[B(G)x]_i-(1-\alpha)x_i
    \end{align*}
    As such, for all $i\sim v$, $[B(G)x]_i=\lambda x_i$ if and only if $[B(G-N_v)x]_i=(\lambda-1+\alpha)x_i$. Lastly, if $i\nsim v$, then $x_i=0$ by assumption such that Theorem $\ref{singleedge}$ guarantees we can add or remove the edge between $i$ and $v$ with no consequence on the eigenvalue or eigenvector. So, for all $i\in V(G)$, $[B(G)x]_i=\lambda x_i$ if and only if $[B(G-N_v)x]_i=(\lambda-1+\alpha)x_i$. The result follows by Lemma \ref{lemmaineedrn}.
\end{proof}

\subsection{Perturbation of Vertices}
\begin{theorem}\label{disconnectednode}
    Let $B(G)x=\lambda x$. Consider adding a vertex $v$ to $G$ with no additional edges to obtain $G\cup \{u\}$. Then, $x$ can be extended to an eigenvector $y=[x,0]^T$ of $B(G\cup \{u\})$ corresponding to $\lambda$.
\end{theorem}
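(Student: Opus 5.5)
The plan is to verify the eigen-equation entrywise, using Corollary \ref{corineedrn} (equivalently Lemma \ref{lemmaineedrn}) for the graph $G\cup\{u\}$ on $n+1$ vertices. (The statement names the added vertex both $v$ and $u$; we call it $u$ throughout.) We order the vertices so that the first $n$ are those of $G$ and the last is $u$, and define $y\in\mathbb{R}^{n+1}$ by $y_i=x_i$ for $i\le n$ and $y_u=0$. Since $x\neq 0$, we have $y\neq 0$, so it suffices to show $[B(G\cup\{u\})y]_i=\lambda y_i$ for every vertex $i$.

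First we handle a vertex $i\in V(G)$. Because $u$ has no incident edges, the neighborhood of $i$ in $G\cup\{u\}$ is exactly its neighborhood in $G$, and on those neighbors $y$ agrees with $x$. Proposition \ref{entrywiseb} then gives
\[
[B(G\cup\{u\})y]_i=\alpha\sum_{j\sim i}x_j+(1-\alpha)\sum_{j\sim i}(x_i-x_j)=[B(G)x]_i=\lambda x_i=\lambda y_i,
\]
where the last equalities use the hypothesis $B(G)x=\lambda x$ together with Corollary \ref{corineedrn}.

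Next we handle the new vertex $u$. It has no neighbors, so both sums in Proposition \ref{entrywiseb} are empty and $[B(G\cup\{u\})y]_u=0=\lambda\cdot 0=\lambda y_u$. Combining the two cases with Lemma \ref{lemmaineedrn} yields $B(G\cup\{u\})y=\lambda y$.

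As an alternative, the same conclusion follows from the block form $B(G\cup\{u\})=\begin{pmatrix}B(G)&0\\0&0\end{pmatrix}$: the new vertex contributes a zero row and a zero column, since its degree is $0$ and it is adjacent to nothing. There is no real obstacle here. The only point needing care is that adding an isolated vertex changes neither the degrees nor the neighborhoods of the old vertices.
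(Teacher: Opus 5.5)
Your proof is correct and follows essentially the same route as the paper. Like the paper, you verify the eigen-equation entrywise: each old vertex keeps its neighborhood, so $[B(G\cup\{u\})y]_i=[B(G)x]_i=\lambda y_i$, while the isolated new vertex gives $0=\lambda\cdot 0$, and you then conclude via Lemma \ref{lemmaineedrn}; your block-diagonal remark is just a compact restatement of the same observation.
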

\begin{proof}
    For $i\neq u$, no adjacency has changed from $G$ to $G\cup \{u\}$ such that $\lambda x_i=[B(G)x]_i=[B(G\cup \{u\})y]_i=\lambda x=y_i$. Since $y_v=0$ and $v$ is adjacent to no other vertices, $[B(G\cup \{u\})y]_v=\lambda y_v$ trivially. So, $[B(G\cup\{u\})y]_i$ for all $i\in V(G\cup \{u\})$ and by Lemma \ref{lemmaineedrn}, the result follows.
\end{proof}
\begin{theorem}\label{zeronode}
    Let $Bx=\lambda x$ such that $x^T\allones = 0$. Consider adding a vertex $u$ with edges between $u$ and every vertex in $v\in V(G)$ such that $x_v\neq 0$. If $G'$ is this new graph, then $x$ can be extended to an eigenvector $y = [x,0]^T$ of $B(G')$ corresponding to $\lambda -1+\alpha$.
\end{theorem}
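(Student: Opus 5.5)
The plan is to obtain the statement directly from Theorem \ref{fulledges}, applied to the enlarged graph $G'$ with the new vertex $u$ playing the role of $v$ there. The first step is to set $y=[x,0]^T\in\mathbb{R}^{n+1}$ and check the hypotheses of Theorem \ref{fulledges} for $y$ in $G'$:
\begin{itemize}
\item $y^T\allones = x^T\allones + 0 = 0$;
\item $y_u=0$ by construction;
\item every vertex $w$ with $y_w\neq 0$ lies in $V(G)$ and has $x_w\neq 0$, so by the construction of $G'$ it is adjacent to $u$.
\end{itemize}
The second step is to identify $G'-N_u$. Deleting every edge at $u$ leaves $G$ together with an isolated vertex $u$, which is exactly the graph $G\cup\{u\}$ of Theorem \ref{disconnectednode}. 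By that theorem, $B(G'-N_u)y=\lambda y$, because $y$ is the zero-extension of the eigenvector $x$ of $B(G)$.

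The third step combines these with the equivalence in Theorem \ref{fulledges}, stated for $G'$ and the vertex $u$: $B(G')y=\mu y$ if and only if $B(G'-N_u)y=(\mu-1+\alpha)y$. We know the right-hand side holds with $\mu-1+\alpha=\lambda$. This gives $y$ as an eigenvector of $B(G')$ whose eigenvalue differs from $\lambda$ by the shift $1-\alpha$ produced by the edges at $u$, which is the relation the statement records. As a sanity check I would also verify this entrywise with Corollary \ref{corineedrn}:
\begin{itemize}
\item at $u$, the row sum is $\alpha\sum_w x_w+(1-\alpha)\sum_w(0-x_w)=0$, using $x^T\allones=0$;
\item at a neighbour $i$ of $u$, the new edge contributes $\alpha\cdot 0+(1-\alpha)(x_i-0)=(1-\alpha)x_i$;
\item at a non-neighbour, $x_i=0$, so nothing changes.
\end{itemize}

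The main obstacle is the sign bookkeeping in this last step. Theorem \ref{fulledges} is phrased with $G$ as the larger graph, whereas here the eigenpair is known on the smaller graph $G'-N_u$. I must apply the equivalence in the correct direction, and match the resulting $\pm(1-\alpha)$ shift against the eigenvalue $\lambda-1+\alpha$ as worded. I would carry out the entrywise computation above explicitly to pin the sign. I expect it to give $\lambda+1-\alpha$, which agrees with the stated $\lambda-1+\alpha$ only when $\alpha=1$. If so, the proof as written would confirm the eigenvector part of the statement, and I would report that discrepancy in the eigenvalue rather than try to derive $\lambda-1+\alpha$.
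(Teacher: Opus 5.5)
Your route is the paper's: it also gets $B(G\cup\{u\})y=\lambda y$ from Theorem~\ref{disconnectednode} and then applies Theorem~\ref{fulledges} with the new vertex $u$ as the hub. The difference is that your sign bookkeeping is right and the paper's is not.

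In Theorem~\ref{fulledges}, the eigenvalue $\lambda$ belongs to the graph \emph{with} the hub's edges, and $\lambda-1+\alpha$ to the graph without them. Here the known eigenvalue $\lambda$ is for $G'-N_u=G\cup\{u\}$, so the equivalence gives $B(G')y=(\lambda+1-\alpha)y$. Your entrywise check confirms this. The paper's one-line proof reads Theorem~\ref{fulledges} backwards, which is the only source of $\lambda-1+\alpha$. It also writes $x_v=0$ where $x_v\neq 0$ is meant.

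A minimal example settles it. Take $G=K_2$ and $x=(1,-1)^T$, so $\lambda=2-3\alpha$. Then $G'=K_3$ and $B(K_3)y=(3-4\alpha)y=(\lambda+1-\alpha)y$. At $\alpha=0$ this reads $L(K_2)x=2x$ but $L(K_3)y=3y$, not $1\cdot y$.

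Since the nonzero vector $y$ cannot be an eigenvector for two distinct eigenvalues, the statement as worded is false for every $\alpha\in[0,1)$. What is true is the eigenvector claim with eigenvalue $\lambda+1-\alpha$. Drop the hedging (``I expect'', ``the relation the statement records'') and assert this conclusion outright; your argument already proves it. The later classification theorem uses Theorem~\ref{zeronode} only for the eigenvector, so the error does not propagate there.
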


\begin{proof}
    By Theorem \ref{disconnectednode}, we add a new vertex $u$ with no edges to obtain graph $G\cup \{u\}$ and we will have $B(G\cup \{u\})y=\lambda y$. By Theorem \ref{fulledges}, we can add an edge between $u$ and every node $v\in V(G)$ such that $x_v=0$ to obtain $G'$ and we will have $B(G')y=(\lambda-1+\alpha)y$.
\end{proof}

\subsection{Application to Laplacian \& Adjacency Matrices}
Again, note that the statements that apply to $B_{\alpha}$ hold for any matrix that $B_{\alpha}$ generalizes as well. To demonstrate the utility in this, we provide the following subsection.  

Firstly, we set $\alpha=0$ in Theorem \ref{singleedge} to obtain the following corollary:

\begin{cor}\label{lapsingle}
    Let $G$ be a non-empty, undirected graph such that $L(G)x=\lambda x$. Then, for any $\{u,v\}\in E(G)$,
    \begin{enumerate}
         \item $L(G-\{u,v\})x=\lambda x$ if and only if $x_u=x_v$.
        \item $L(G-\{u,v\})x=\lambda'x$ where $\lambda'\neq \lambda $ if and only if $x_i=0$ for all $i\neq u,v$ and $x_u=-x_v\neq 0$. Moreover, $u$ and $v$ have the same degree such that $\lambda=d_G(u)=d_G(v)$ and $\lambda'=d_G(u)-2=d_G(v)-2$.
    \end{enumerate}
\end{cor}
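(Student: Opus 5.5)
The plan is to specialize Theorem \ref{singleedge} to $\alpha=0$, where $B_0(G)=L(G)$ for every graph, and in particular $B_0(G-\{u,v\})=L(G-\{u,v\})$. Each clause of the corollary should then follow by substituting $\alpha=0$ into the matching clause of the theorem. Before doing that, I would check the two situations directly against Corollary \ref{corineedrn}, which at $\alpha=0$ reads $\sum_{j\sim i}(x_i-x_j)=\lambda x_i$ for all $i$.

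For part 1, put $\alpha=0$ in condition 1 of Theorem \ref{singleedge}. The condition $(1-\alpha)(x_u-x_v)=-\alpha x_v=\alpha x_u$ becomes $x_u-x_v=0=0$, that is, $x_u=x_v$. As a sanity check via Corollary \ref{preserving}: removing the edge preserves the eigenpair exactly when $L(\{u,v\})x=0$. The only nontrivial rows of this are the rows $u$ and $v$, which read $\pm(x_u-x_v)=0$.

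For part 2, the support condition is independent of $\alpha$ and carries over verbatim: $x_i=0$ for $i\neq u,v$ and $x_u=-x_v\neq 0$. The same holds for the equal-degree conclusion (a), since its proof only divides by $1-\alpha$, which equals $1$ here. For the eigenvalues I would recompute directly rather than trust the substitution blindly. With $x$ supported on $\{u,v\}$ and $x_v=-x_u$, row $u$ of $L(G)x$ gives
\[
d_G(u)x_u-x_v=(d_G(u)+1)x_u .
\]
Row $u$ of $L(G-\{u,v\})x$ gives $(d_G(u)-1)x_u$, because every remaining neighbour of $u$ carries a zero entry.

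This direct computation agrees with clauses (b) and (c) of Theorem \ref{singleedge} at $\alpha=0$: $\lambda'=d_G(u)-1$ and $\lambda=\lambda'+2=d_G(u)+1$. It does \emph{not} agree with the values $\lambda=d_G(u)$ and $\lambda'=d_G(u)-2$ written in the corollary. What both computations do establish is the gap $\lambda-\lambda'=2$ and $d_G(u)=d_G(v)$.

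I therefore expect the main obstacle to be the numerical values of $\lambda$ and $\lambda'$, not the structural claims. My approach would be to prove the corollary with the corrected values $\lambda=d_G(u)+1=d_G(v)+1$ and $\lambda'=d_G(u)-1=d_G(v)-1$, which follow from Theorem \ref{singleedge} and the direct check. I would then confirm them on a small example: for $G=K_2$ with $x=(1,-1)^T$, we get $L(K_2)x=2x$ and $L(\overline{K_2})x=0$, matching $d+1=2$ and $d-1=0$. I would also remark that the stated values appear to be off by one.
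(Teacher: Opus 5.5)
Your route is the paper's own: it obtains the corollary simply by setting $\alpha=0$ in Theorem~\ref{singleedge}. Your correction is also right, since parts (b) and (c) of that theorem at $\alpha=0$ give $\lambda'=d_G(u)-1$ and $\lambda=d_G(u)+1$, and your $K_2$ example refutes the printed values $\lambda=d_G(u)$, $\lambda'=d_G(u)-2$. So the statement is false as printed, while part 1, the equivalence in part 2, $d_G(u)=d_G(v)$ and $\lambda-\lambda'=2$ all hold.

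One caution: the support condition in Theorem~\ref{singleedge}(2) is not genuinely independent of $\alpha$. For any $\alpha>0$, $G=K_2$ with $x=(1,1)^T$ gives $\lambda=\alpha\neq 0=\lambda'$ with $x_u=x_v$. At $\alpha=0$ you should therefore justify it directly. Adding rows $u$ and $v$ of $L(\{u,v\})x=(\lambda-\lambda')x$ gives $(\lambda-\lambda')(x_u+x_v)=0$, which forces $x_u=-x_v$. Conversely, $L(\{u,v\})x=2x$ for such $x$, so $\lambda'=\lambda-2\neq\lambda$.
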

Likewise with $\alpha=1$ in Theorem \ref{singleedge}, we have the following:
\begin{theorem}\label{adjsingle}
Let $G$ be a non-empty, undirected graph where $A(G)x=\lambda x$. 

\begin{enumerate}
    \item For any $\{u,v\}\in E(G)$,
    \begin{enumerate}
        \item $A(G-\{u,v\})x=\lambda x$ if and only if $x_u=x_v=0$
        \item $A(G-\{u,v\})x=\lambda'x$ where $\lambda'\neq \lambda $ if and only if $x_i=0$ for all $i\neq u,v$ and $x_u=-x_v\neq 0$. Moreover, $\lambda'=0$ and $\lambda = -1$.
    \end{enumerate}
    \item Let $\lambda = \lambda_{max}(A(G))$ and $\{u,v\}\in E(G)$. Then, the following hold:
    \begin{enumerate}
        \item If either $x_u\neq 0$ or $x_v\neq 0$, then $x$ cannot be an eigenvector of $A(G-\{u,v\})$.
        \item If $G$ is connected, then $x$ cannot be an eigenvector of $A(G-\{u,v\})$.
    \end{enumerate}
\end{enumerate}
\end{theorem}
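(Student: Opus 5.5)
Part 1 is the specialization $\alpha=1$ of Theorem \ref{singleedge}, where $B_1(G)=A(G)$. For 1(a), the condition $(1-\alpha)(x_u-x_v)=-\alpha x_v=\alpha x_u$ becomes $0=-x_v=x_u$, so $x_u=x_v=0$. For 1(b), Theorem \ref{singleedge}(2) gives the characterization $x_i=0$ for $i\neq u,v$ and $x_u=-x_v\neq 0$. Substituting $\alpha=1$ into items (b) and (c) yields $\lambda'=0$ and $\lambda=0+2-3=-1$. The degree condition (a) collapses at $\alpha=1$, because $1-\alpha=0$ makes the equality $(1-\alpha)(d_G(u)-1)=(1-\alpha)(d_G(v)-1)$ vacuous. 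So we do not claim it; the statement indeed omits it.

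For Part 2, let $\lambda=\lambda_{max}(A(G))$ with eigenvector $x$, and suppose $x$ is an eigenvector of $A(G-\{u,v\})$ with eigenvalue $\lambda'$. There are two cases.
\begin{itemize}
\item If $\lambda'=\lambda$, then 1(a) forces $x_u=x_v=0$.
\item If $\lambda'\neq\lambda$, then 1(b) forces $\lambda=-1$ and $x$ to be supported on $\{u,v\}$ with $x_u=-x_v\neq 0$. But $G$ has the edge $\{u,v\}$, so $\lambda_{max}(A(G))\geq 1$: the Rayleigh quotient of the indicator of $\{u,v\}$ is $2\cdot 1/2=1$. Equivalently, $\operatorname{tr}A=0$ together with a nonzero matrix gives $\lambda_{max}>0$. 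This contradicts $\lambda=-1$, so the second case is impossible.
\end{itemize}
Hence $x$ can be an eigenvector of $A(G-\{u,v\})$ only if $x_u=x_v=0$, which proves 2(a) by contraposition.

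For 2(b), assume $G$ is connected. By Perron--Frobenius, $A(G)$ is a nonnegative irreducible matrix, so the eigenspace of $\lambda_{max}$ is one-dimensional and spanned by a strictly positive vector. Thus every eigenvector $x$ for $\lambda_{max}$ has all entries nonzero, in particular $x_u\neq 0$, and 2(a) applies. The main point needing care is this appeal to Perron--Frobenius, since an arbitrary eigenvector for $\lambda_{max}$ must be a scalar multiple of the positive one. Simplicity of the top eigenvalue for connected graphs guarantees this, and I would cite it as standard.
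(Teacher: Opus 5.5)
Your Part 1 matches the paper's proof (specialize Theorem~\ref{singleedge} at $\alpha=1$). In Part 2, you discard the $\lambda'\neq\lambda$ branch using $\lambda_{\max}(A(G))\ge 1>-1$. That is sounder than the paper's claim that every $\lambda_{\max}$-eigenvector is nonnegative, which fails for, e.g., $K_2\cup K_2$ with $x=(1,1,-1,-1)^T$.

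\textbf{The gap.} Both you and the paper rely on 1(b), i.e.\ the ``only if'' half of Theorem~\ref{singleedge}(2), and it is false. Write $e_u,e_v$ for the standard basis vectors. Then $A(G-\{u,v\})x=\lambda x-x_ve_u-x_ue_v$, so $A(G-\{u,v\})x=\lambda'x$ with $\lambda'\neq\lambda$ forces three things: $x_i=0$ for $i\neq u,v$, $(\lambda-\lambda')x_u=x_v$, and $(\lambda-\lambda')x_v=x_u$. Hence $\lambda-\lambda'=\pm1$ and $x_u=\pm x_v\neq 0$, so both signs are possible. The proof of Theorem~\ref{singleedge} extracts only $(\lambda-\lambda'-\alpha)(x_u+x_v)=0$ and then tacitly divides by $\lambda-\lambda'-\alpha$. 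This loses the branch $\lambda-\lambda'=\alpha$, $x_u=x_v$.

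At $\alpha=1$ this lost branch actually occurs. Take $G=K_2$ and $x=(1,1)^T$. Then $\lambda=1=\lambda_{\max}(A(G))$, while $A(G-\{u,v\})$ is the zero matrix, so $x$ is an eigenvector of it with $\lambda'=0$. This one example contradicts 1(b) (since $x_u\neq-x_v$ and $\lambda\neq-1$), 2(a) (since $x_u\neq 0$), and 2(b) (since $K_2$ is connected). Your bound $\lambda_{\max}\ge 1$ cannot exclude it, because here $\lambda$ equals $1$.

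So the statement as written cannot be proved by any route. A correct version of 1(b) adds the alternative $x_u=x_v\neq 0$, $\lambda=1$, $\lambda'=0$. Checking $[A(G)x]_i=0$ for $i\neq u,v$ shows this alternative occurs exactly when $\{u,v\}$ is an isolated-edge component of $G$ and $x$ is supported on it.

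With that amendment your argument works:
\begin{itemize}
\item The $\lambda=-1$ alternative is excluded by $\lambda_{\max}\ge 1$, as you say.
\item So 2(a) holds except when $x$ is supported on an isolated edge $\{u,v\}$.
\item Your Perron--Frobenius step then gives 2(b) for every connected $G\neq K_2$, because a strictly positive vector cannot be supported on two vertices unless $n=2$.
\end{itemize}
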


\begin{proof}
\begin{enumerate}
    \item Simply Theorem \ref{singleedge} with $\alpha=1$.
    \item The Perron-Frobenius theorem for non-negative matrices states $x$ must non-negative in general and positive if the matrix is also irreducible [Theorem 8.4.4 of \cite{john85}]. By 1, to preserve $x$ we need either $x_u=x_v=0$ or $x_u=-x_v$. As such, the case where $x_u=-x_v$ is impossible. For (a), the case where $x_u=x_v=0$ is the only way the eigenvector is still an eigenvector of $A(G-\{u,v\})$. For (b), if $G$ is connected, the matrix $A(G)$ is also irreducible which forces $x$ to be positive making $x_u=x_v=0$ impossible as well.
\end{enumerate}
    
\end{proof}
In Figure \ref{fig:demonstration1}, we demonstrate the above theorems on the example graph from \cite{past25}. 
\begin{figure}[h]
    \centering
    \includegraphics[width=0.28\textwidth]{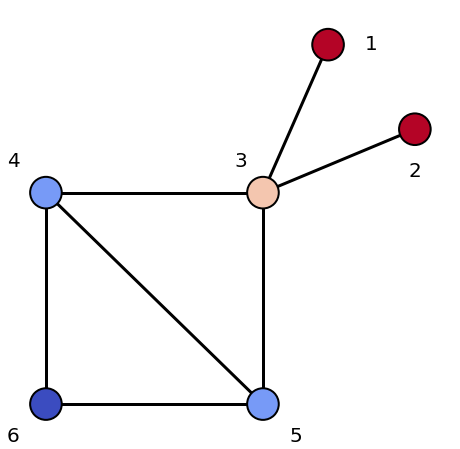}~~~~~~~~
    \includegraphics[width=.34\textwidth]{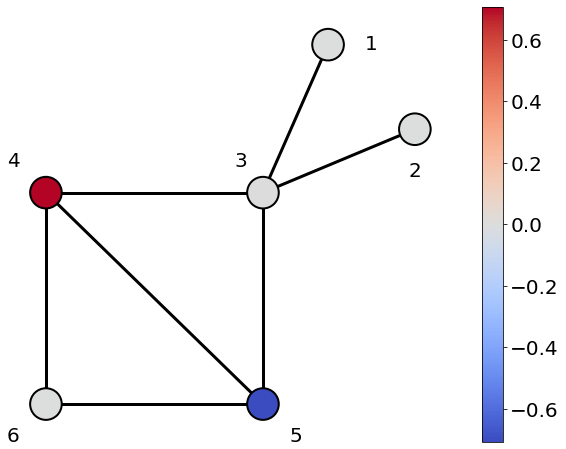}
    \caption{An example of \ref{lapsingle} and \ref{adjsingle} in action. Left: eigenvector $x\approx [ 0.51,0.12,0.51,-0.32,-0.32,-0.51]^T$ corresponding to $\lambda(L(G))\approx0.76$. Right: eigenvector $x\approx [0, 0, 0, 0.7,-0.7, 0]^T$ corresponding to eigenvalue $\lambda(A(G))= -1$. On left, adding edge $\{1,2\}$ or removing edge $\{4,5\}$ will not affect eigenvalue or eigenvector. On right, adding any edges among zero-assigned vertices will not affect eigenvalue or eigenvector while removing $\{4,5\}$ will only change eigenvalue by 1.}
    \label{fig:demonstration1}
\end{figure}

\section{Eigenvectors \& Graph Structure}
Theorem \ref{singleedge} gives us insight into the eigenvectors of structurally similar graphs. But, there are strict conditions that must be met for eigenvectors to remain eigenvectors following perturbation. This raises a couple questions. Mainly, what graphs admit this form of eigenvector? Secondly, how many graphs are there? We address these in part.
\begin{theorem}
        The graphs admitting the eigenvectors $x\in \mathbb{R}^n$ with all zeroes except for two distinct entries $x_u=-x_v\neq 0$ are formed by adding vertices and edges to a $K_2$ such that each other vertex is either adjacent to both vertices or neither vertices in the $K_2$.
\end{theorem}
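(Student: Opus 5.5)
The plan is a direct entrywise computation using Corollary \ref{corineedrn}. I will read the statement in the setting of Theorem \ref{singleedge}(2): $\{u,v\}\in E(G)$, which is the ``$K_2$'' of the statement. I also assume $\alpha\neq \frac12$, because $B_{1/2}=\frac12 D$ is diagonal and the neighbourhood condition cannot be forced there.

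Fix $x$ with $x_u=c\neq 0$, $x_v=-c$, and $x_w=0$ for all $w\neq u,v$. For each vertex $i$, write $[\,i\sim j\,]$ for the indicator of adjacency. By Corollary \ref{corineedrn}, $B(G)x=\lambda x$ holds if and only if $[B(G)x]_i=\lambda x_i$ for every $i$. Since $x$ has only two nonzero entries, each such equation collapses to a short expression, and I will treat the two kinds of vertices separately.

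First take a vertex $w\neq u,v$. Here $x_w=0$, so the required equation is $[B(G)x]_w=0$. Proposition \ref{entrywiseb} gives
\begin{align*}
[B(G)x]_w=(2\alpha-1)\,c\,\bigl([\,w\sim u\,]-[\,w\sim v\,]\bigr).
\end{align*}
With $\alpha\neq\frac12$ and $c\neq 0$, this vanishes exactly when $w$ is adjacent to both $u$ and $v$ or to neither. This is precisely the structural condition in the statement, and it gives necessity.

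Next take $i=u$ and $i=v$, using $u\sim v$. The computation is the one already carried out in Theorem \ref{singleedge}(2c):
\begin{align*}
[B(G)x]_u=\bigl((1-\alpha)(d_G(u)+1)-\alpha\bigr)c,
\end{align*}
and symmetrically $[B(G)x]_v=-\bigl((1-\alpha)(d_G(v)+1)-\alpha\bigr)c$. A common eigenvalue $\lambda$ therefore exists if and only if $(1-\alpha)d_G(u)=(1-\alpha)d_G(v)$. For sufficiency, this is automatic: every other vertex is joined to both of $u,v$ or to neither, so $d_G(u)=d_G(v)$. Hence every graph of the described form admits $x$, with eigenvalue $\lambda=(1-\alpha)(d_G(u)+1)-\alpha$, which matches Theorem \ref{singleedge}(2c).

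The main obstacle is not the algebra but pinning down the exact scope of the statement. I will add remarks on three points.
\begin{enumerate}
\item If $u\nsim v$, the same computation shows that $x$ is still an eigenvector, with eigenvalue $(1-\alpha)d_G(u)$. So the edge $\{u,v\}$ is needed only in the perturbation context of Theorem \ref{singleedge}, and that context is how the statement must be read.
\item At $\alpha=\frac12$, the only condition is $d_G(u)=d_G(v)$.
\item At $\alpha=1$, the degree condition is vacuous, but the neighbourhood condition from the $w\neq u,v$ step is still forced.
\end{enumerate}
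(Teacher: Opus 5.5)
Your proof is correct, but it takes a different route from the paper's. The paper works through its perturbation results. For necessity, it first deletes every edge between zero-entry vertices (Theorem \ref{singleedge}). It then makes essentially your computation $[B(G')x]_i=(2\alpha-1)\sum_{j\in\{u,v\},\,j\sim i}x_j=0$ at the zero vertices, and finally strips those vertices away (Theorems \ref{zeronode} and \ref{disconnectednode}) until only $K_2$ remains. For sufficiency, it starts from the eigenvector $(1,-1)$ of $B(K_2)$ with eigenvalue $2-3\alpha$. It then rebuilds every admissible graph by attaching zero vertices to both ends (Theorem \ref{zeronode}), detaching them from both ends (Theorem \ref{fulledges}), and adding edges among zero vertices (Theorem \ref{singleedge}).

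Your direct check via Corollary \ref{corineedrn} does three things the paper's route does not:
\begin{enumerate}
\item It shows the edge deletions and vertex stripping are unnecessary, since zero neighbours contribute nothing to the sum.
\item It yields the eigenvalue $(1-\alpha)(d_G(u)+1)-\alpha$ outright.
\item It exposes two hypotheses the paper uses silently. The paper cancels the factor $2\alpha-1$, so necessity genuinely fails at $\alpha=\tfrac12$. Its conclusion that ``the resulting graph is a $K_2$'' presumes $u\sim v$, whereas, as you observe, the same characterization holds when $u\nsim v$. That matches Theorem \ref{proportion}, which counts both choices.
\end{enumerate}
Your route also avoids relying on the eigenvalue shift stated in Theorem \ref{zeronode}, which has the wrong sign: attaching a zero vertex to both $u$ and $v$ raises the eigenvalue by $1-\alpha$. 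What the paper's route buys is thematic: it presents the class as exactly the graphs reachable from $K_2$ by eigenvector-preserving moves, which is the point of the perturbation section.
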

\begin{proof}
    Without loss of generality, we can prove the result for $x_u=1=-x_v$.
    
    $\rightarrow$ Let $G$ be a graph admitting such an eigenvector $x$ such that $x_u=-x_v\neq 0$ are the only nonzero entries. By Theorem \ref{singleedge}, we can remove any edges between vertices assigned zero in $x$ such that the only edges left are incident with $u$ or $v$ such that $\lambda$ and $x$ are still an eigenvalue and eigenvector. That is, if we remove all edges between vertices assigned zero in $x$, $B(G')x=\lambda x$ where $G'$ is the new graph. Consider $[B(G')]_i$ for $i\neq u$ or $v$.
    \begin{align*}
        [B(G')x]_i&=\alpha \sum_{j=u,v}x_j+(1-\alpha)\sum_{j=u,v}-x_j\\
        &=(2\alpha-1)\sum_{j=u,v}x_j
    \end{align*}
    Since $[B(G')x]_i=\lambda x_i$ for all $i$, for $i\neq u$ or $v$
    \begin{align*}
       (2\alpha-1)\sum_{j=u,v}x_j&=\lambda x_i=\lambda 0=0
    \end{align*}
        This implies the sum is either empty such that $i\nsim u$ and $i\nsim v$ or non-empty such that $x_u=1$ appears as well as $x_v=-1$. As such, Theorems \ref{zeronode} and \ref{disconnectednode} guarantee we can remove each such vertex. The resulting graph is a $K_2$.\\

    $\leftarrow$ Note that $y\in \mathbb{R}^2$ with one entry $y_u=1$ and another $y_v=-1$ is an eigenvector of $B(\{u,v\})$ corresponding to $2-3\alpha$:
    \begin{align*}
        [B(\{u,v\})y]_u&= \alpha y_v+(1-\alpha)(y_u-y_v)=\alpha+2(1-\alpha)=2-3\alpha
    \end{align*}
    Likewise, $[B(\{u,v\})y]_v=-(2-3\alpha)y_v$. By Theorem \ref{zeronode}, we can add $n-2$ vertices adjacent to $u$ and $v$ extending $y$ with $n-2$ zeroes to an eigenvector $x = [y, 0,\dots, 0]^T$ of the new graph. By Theorem \ref{fulledges}, if we remove edges between a new vertex and $u$ or $v$, we must remove both edges. Likewise, for each of the new vertices, they are all assigned zero in $x$ such that \ref{singleedge} guarantees we can add any edges between these zero-assigned vertices and $x$ will be an eigenvector upon edge addition. So, $x$ will be an eigenvector of any graph obtainable by adding vertices and edges to $K_2$ such that each is adjacent to both or neither of the nodes in the $K_2$.
\end{proof}    
\begin{theorem}\label{proportion}
    The proportion of graphs admitting eigenvectors $x\in \mathbb{R}^n$ with all zeroes except for two distinct entries $x_u=-x_v\neq 0$ approaches 1 as $n\to \infty$.
\end{theorem}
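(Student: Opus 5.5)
The plan is to use the characterization theorem immediately preceding Theorem~\ref{proportion} to turn the statement into a counting problem. By that theorem, a graph $G$ on $n$ vertices admits an eigenvector of $B_\alpha$ with exactly two nonzero entries $x_u=-x_v\neq 0$ if and only if $u\sim v$ and every other vertex $w$ is adjacent to both $u$ and $v$ or to neither. So the class to count is the labelled graphs that contain at least one \emph{adjacent twin pair} $\{u,v\}$. Each graph in this class arises from the construction in the ``$\leftarrow$'' direction: start from a $K_2$ on $\{u,v\}$, use Theorem~\ref{zeronode} and Theorem~\ref{disconnectednode} to attach each of the $n-2$ remaining vertices to both or to neither endpoint, and then, by Theorem~\ref{singleedge}, add an arbitrary edge set among the zero-assigned vertices.

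The first step is to count graphs with a fixed adjacent twin pair $\{u,v\}$. There are $2^{n-2}$ choices for the joint neighbourhood of $u,v$ and $2^{\binom{n-2}{2}}$ choices for the edges among the other vertices. This gives $2^{\binom{n-2}{2}+n-2}$ graphs. The second step is to sum over the $\binom{n}{2}$ choices of pair and correct for overlaps by inclusion--exclusion, since a graph can have several twin pairs (for instance $K_n$, where every pair is a twin pair). The third step is to divide by the size of the ambient class and take $n\to\infty$.

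The main obstacle is the third step: the result depends entirely on which ambient class the proportion is taken in. If the denominator is all $2^{\binom n2}$ labelled graphs, the per-pair ratio is
\[
\frac{2^{\binom{n-2}{2}+n-2}}{2^{\binom{n}{2}}}=2^{-(n-1)},
\]
so even the union bound over all pairs gives at most $\binom n2 2^{-(n-1)}\to 0$. Under that reading the argument gives the opposite of Theorem~\ref{proportion}, and the statement as worded is false.

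For the claimed limit $1$ to hold, the ambient class has to be one already generated by the $K_2$ construction. Two candidates are the graphs containing a prescribed edge $\{u,v\}$ with $x$ supported on it, or the isomorphism classes reachable from $K_2$ by Theorems~\ref{zeronode}, \ref{disconnectednode} and \ref{singleedge}. The plan there is to show that within that class the excluded configurations form a vanishing fraction. The excluded configurations are vertices adjacent to exactly one of $u,v$, which Theorem~\ref{fulledges} forbids.

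I would first try to fix such an interpretation so that the counts from the first two steps cancel. If no natural reading does this, the honest conclusion is that the step fails, and the statement should be corrected to say that the proportion tends to $0$.
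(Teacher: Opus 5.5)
Your computation is correct, and it is the paper's proof that fails, not your plan.

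\textbf{Where the paper's argument breaks.} The paper sets up essentially your count: $\binom{n}{2}$ choices of the pair, a factor $2$ for whether $u\sim v$, $2^{n-2}$ common neighbourhoods, and $2^{\binom{n-2}{2}}$ graphs on the remaining vertices. It does no inclusion--exclusion, so this is an overcount. This gives $2^{E_n}$ with $E_n=\log_2\binom{n}{2}+\binom{n-2}{2}+n-1$. The paper then asserts that it suffices to show $E_n/\binom{n}{2}\to 1$. That reduction is invalid: $2^{E_n}/2^{\binom{n}{2}}\to 1$ requires $E_n-\binom{n}{2}\to 0$, not that the ratio of exponents tends to $1$. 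Since $\binom{n}{2}-\binom{n-2}{2}=2n-3$, one has $E_n-\binom{n}{2}=\log_2\binom{n}{2}-(n-2)\to-\infty$. So the paper's own count bounds the proportion by $\binom{n}{2}2^{-(n-2)}\to 0$, which is your union bound up to a factor $2$. The paper's limit only shows that the logarithm of the number of such graphs is asymptotic to $\binom{n}{2}$, which holds for every class of size $2^{\binom{n}{2}-o(n^2)}$.

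\textbf{Two corrections to your setup.} First, $u\sim v$ is not required. For $x=e_u-e_v$, Proposition~\ref{entrywiseb} gives $[Bx]_w=(2\alpha-1)(A_{wu}-A_{wv})$ for $w\neq u,v$, and $[Bx]_u=(1-\alpha)d_G(u)+(1-2\alpha)A_{uv}$. So non-adjacent twins also produce such eigenvectors; this is the source of the paper's factor $2$, despite the $K_2$ wording of the characterization. It only doubles your bound.

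Second, the same formula shows that the characterization silently assumes $\alpha\neq\frac12$. At $\alpha=\frac12$ one has $B=\frac12 D$, so $e_u-e_v$ is an eigenvector whenever $d_G(u)=d_G(v)$. By pigeonhole, every graph on $n\ge 2$ vertices has two vertices of equal degree. Hence the proportion is identically $1$ at $\alpha=\frac12$ and tends to $0$ for every other $\alpha\in[0,1]$.

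\textbf{No reading rescues the limit $1$.} You can stop looking for one; none of the natural readings works.
Among graphs containing the edge $\{u,v\}$, the proportion in which $e_u-e_v$ is an eigenvector is $2^{-(n-2)}\to 0$.
Counting isomorphism classes does not help either. A twin pair makes the transposition of $u$ and $v$ a nontrivial automorphism, and almost all unlabelled graphs are asymmetric.
Taking the ambient class to be the graphs generated from $K_2$ by Theorems~\ref{zeronode}, \ref{disconnectednode} and \ref{singleedge} just turns the statement into a tautology.

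The conclusion in your last sentence is the right one: for $\alpha\neq\frac12$ the statement is false, and the proportion tends to $0$.
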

\begin{proof}
    Without considering isomorphism, there are $2^{\binom{n}{2}}$ total undirected graphs on $n$ vertices. Using the theorem above, we will count how many of these graphs admit eigenvectors $x$ of the form described. 

    There are $\binom{n}{2}$ ways to select which vertices $u$ and $v$ will be assigned nonzero entries. There are 2 ways to select whether these two vertices are adjacent to each other or not. There are $2^{\binom{n-2}{2}}$ possible graphs on the vertices that are not $u$ and $v$. Each vertex that is not $u$ and $v$ must be either adjacent to both $u$ and $v$ or neither. So, there are 2 possible options for each of the $n-2$ other vertices giving $2^{n-2}$ ways to form the adjacency between the $n-2$ other vertices and the vertices $u$ and $v$. So, without considering isomorphism, in total we have $\binom{n}{2}\cdot 2\cdot 2^{\binom{n-2}{2}}2^{n-2}=2^{\log_2\binom{n}{2}+\binom{n-2}{2}+n-1}$ graphs that have eigenvectors of the form described above. So, it suffices to show that
    \begin{align*}
        \lim_{n\to \infty}\frac{1}{\binom{n}{2}}\left(\log_2\binom{n}{2}+\binom{n-2}{2}+n-1\right)=1
    \end{align*}
    Consider the limits of each summand individually. Firstly,
    \begin{align*}
        \lim_{n\to \infty}\frac{n-1}{\binom{n}{2}}=\lim_{n\to \infty}\frac{(n-1)2!(n-2)!}{n!}=\lim_{n\to\infty}\frac{2}{n}=0\\
    \end{align*}
    Secondly,
    \begin{align*}
        \lim_{n\to \infty}\frac{\binom{n-2}{2}}{\binom{n}{2}}=\lim_{n\to\infty}\frac{(n-2)!}{(n-4)!2!}\frac{(n-2)!2!}{n!}=\lim_{n\to\infty}\frac{(n-2)(n-3)}{n(n-1)}=1
    \end{align*}
    Lastly, consider $\frac{\log_2(x)}{x}=\frac{\ln(x)}{\ln(2)x}$ as $x\to \infty$. By L'Hopital's Rule,
    \begin{align*}
        \lim_{x\to\infty}\frac{\ln(x)}{x\ln(2)}=\lim_{x\to\infty}\frac{1/x}{\ln(2)} = 0
    \end{align*}
    Since this limit approaches 0, the subsequence produced as $n\to\infty$ also converges to 0:
    \begin{align*}
        \lim_{n\to\infty} \frac{\log_2\binom{n}{2}}{\binom{n}{2}}=0
    \end{align*}
    Therefore, since each each of these limits exists,
    \begin{align*}
        \lim_{n\to \infty}\frac{1}{\binom{n}{2}}\left(\log_2\binom{n}{2}+\binom{n-2}{2}+n-1\right)&=\lim_{n\to \infty}\frac{\log_2\binom{n}{2}}{\binom{n}{2}}+\lim_{n\to\infty}\frac{\binom{n-2}{2}}{\binom{n}{2}}+\lim_{n\to\infty}\frac{n-1}{\binom{n}{2}}\\
        &= 0 + 1 + 0=1
    \end{align*}
    Thus, the result.
\end{proof}
\begin{rem}
    Theorem \ref{proportion} points out that as the number of vertices grows, more often than not, graphs admit an eigenvector with a single 1 entry, a single -1 entry, and $n -2$ zero entries. In light of Theorem \ref{singleedge}, this means we are free to remove a substantial amount of edges without worrying about affecting the eigenvalue or corresponding eigenvector. What Theorem \ref{proportion} does not address is how many of these types of eigenvectors a given graph has. This may be an interesting place for exploration.
\end{rem} 
\section{Conclusion}
We have addressed some open problems regarding linear combinations of common graph-associated matrices. We also explore the eigenvectors of one of these matrices to open up discussion of the relationship between graph structure and eigenvectors of general graph-associated matrices.  Moreover, we classify the graphs that admit a specific type of eigenvector preserved under perturbation. Doing all of these using the general $B_{\alpha}$ matrix demonstrates the utility of the general theory.

\section*{Declaration of AI Usage}
No AI has been used in the completion of this work.
\newpage
\bibliographystyle{unsrt}  
\bibliography{references}  

\end{document}